\documentclass[11pt,reqno]{amsart}
\usepackage[T1]{fontenc}
\usepackage{lmodern}
\usepackage{amsmath,amssymb,amsthm,mathtools}
\usepackage{microtype}
\usepackage[letterpaper,textwidth=6.25in,textheight=9in,centering]{geometry}
\usepackage{enumitem}
\usepackage{xcolor}
\usepackage[colorlinks=true,linkcolor=black,citecolor=black,urlcolor=blue!45!black]{hyperref}
\newtheorem{theorem}{Theorem}[section]
\newtheorem{proposition}[theorem]{Proposition}
\newtheorem{lemma}[theorem]{Lemma}

\theoremstyle{definition}
\newtheorem{definition}[theorem]{Definition}
\newtheorem{example}[theorem]{Example}

\theoremstyle{remark}
\newtheorem{remark}[theorem]{Remark}
\numberwithin{equation}{section}
\DeclareMathOperator{\GL}{GL}
\DeclareMathOperator{\SL}{SL}
\DeclareMathOperator{\GA}{GA}
\DeclareMathOperator{\TA}{TA}
\DeclareMathOperator{\EA}{EA}
\DeclareMathOperator{\Spec}{Spec}
\DeclareMathOperator{\Frac}{Frac}
\DeclareMathOperator{\rank}{rank}
\DeclareMathOperator{\tr}{tr}

\DeclareMathOperator{\diag}{diag}
\DeclareMathOperator{\id}{id}

\newcommand{\Ncal}{\mathcal N}
\newcommand{\Dcal}{\mathcal D}

\newcommand{\Kbar}{\overline K}

\newcommand{\dd}{\mathrm d}
\newcommand{\trans}[1]{\operatorname{Trans}_{#1}}
\setlist[enumerate,1]{label=\textup{(\roman*)},leftmargin=2.2em,itemsep=3pt}
\allowdisplaybreaks[2]
\hypersetup{pdftitle={Nilpotent Jacobian maps in dimension three and stable tameness in block extensions},pdfauthor={Jie Wang}}

\title[Nilpotent Jacobian maps and stable tameness]{Nilpotent Jacobian maps in dimension three\protect\\ and stable tameness in block extensions}
\author{Jie Wang}
\address{Jie Wang, State Key Laboratory of Mathematical Sciences, Academy of Mathematics and Systems Science, Chinese Academy of Sciences, Beijing, China}
\email{wangjie212@amss.ac.cn}
\author{Dan Yan}
\address{Dan Yan, MOE-LCSM, School of Mathematics and Statistics,
Hunan Normal University, Changsha, China}
\email{yan-dan-hi@163.com}
\date{September 11, 2026}
\subjclass[2020]{Primary 14R15; Secondary 14R10, 13N15}
\keywords{Nilpotent Jacobian, polynomial automorphism, linear conjugation, generic fiber, tame automorphism, stable tameness}
\begin{document}
\begin{abstract}
We study polynomial maps in three variables with nilpotent Jacobian over a field of characteristic zero. The proposed classification reduces maps with linearly independent components to a family determined by a univariate polynomial evaluated at a quadratic coordinate. The geometric part of the argument produces two algebraically dependent constant linear combinations of the components. A derivation argument then yields the normal form over the original field. We obtain explicit polynomial inverses and tame factorizations. We then study higher-dimensional maps in which all but the last three components depend on three variables. For this class, we give separate normal forms for the two three-variable blocks and deduce stable tameness from the residue-field criterion of Berson, van den Essen, and Wright.
\end{abstract}
\maketitle
\section{Introduction}\label{sec:introduction}

Let $K$ be a field of characteristic zero. Keller's Jacobian conjecture asserts that a polynomial map $F\colon K^n\to K^n$ with $\det JF\in K^*$ has a polynomial inverse. Introduced by Keller~\cite{Keller}, the problem connects the geometry of affine space with the structure of polynomial algebras. For background on polynomial automorphisms, see~\cite{EssenBook}; for a recent account of the topology of polynomial maps, their nonproperness loci, and connections with the Jacobian conjecture, see El Hilany~\cite{ElHilany2025}.

The status of the general conjecture changed in July 2026 with Alp\"oge's three-dimensional counterexample. Tao~\cite{Tao2026} gives a geometric exposition of the example, while Gao~\cite{Gao2026} develops the tangent-sweep construction and its higher-dimensional extensions. The original map has constant Jacobian determinant $-2$ and identifies three distinct points. These developments show that the general Keller condition does not imply global invertibility in dimension at least three. They also require a distinction between that condition and the more restrictive nilpotent-Jacobian hypotheses studied here.

The reductions of Bass, Connell, and Wright~\cite{BCW} and Yagzhev~\cite{Yagzhev} reduce the general conjecture to maps $F=\id+H$ with $H$ cubic homogeneous and $JH$ nilpotent, with the dimension allowed to vary. These reductions can increase the dimension. Consequently, counterexamples to the general conjecture yield counterexamples within the reduced class in some dimension, but this argument does not determine the three-variable case with $JH$ nilpotent.

Recent work also examines the algebraic constraints imposed by the Jacobian condition. Lee and Li~\cite{LeeLi2024} use generalized Magnus expansions to obtain restrictions on the Newton polygons of inner polynomials associated with planar Jacobian pairs. Ram\'irez and Valqui~\cite{RamirezValqui2025} compute a Gr\"obner basis and describe the solution set of a polynomial system arising in a Laurent-series formulation of the planar problem. In a different direction, Hamada, Kato, and Komiya~\cite{HamadaKatoKomiya2025} relate a Tate-algebra version of the conjecture to the polynomial version under a Hausdorff adic-topology hypothesis. These approaches complement the study of normal forms, generic fibers, and explicit inverses pursued below.

Nilpotence also suggests a structural question. If $H(0)=0$ and $JH$ is nilpotent, must the components of $H$ be linearly dependent over $K$? The answer is affirmative in dimension two and, in any dimension, when the Jacobian has rank at most one; see~\cite{BCW,deBondtYan2014}. For homogeneous maps in dimension three, de Bondt and van den Essen proved the stronger conclusion of linear triangularizability~\cite{deBondtEssen}. The general dependence problem has counterexamples, including homogeneous counterexamples in higher dimensions~\cite{deBondt2006}. Thus a classification in dimension three must account for maps with linearly independent components.

Several classifications treat the case of independent components under additional hypotheses. Chamberland and van den Essen~\cite{ChamberlandEssen} considered maps
\[
 H=\bigl(u(x,y),v(x,y,z),h(u(x,y),v(x,y,z))\bigr).
\]
Yan and Tang~\cite{YanTang} studied restrictions on the degree in $z$, and Yan and de Bondt~\cite{YanBondt2019} treated related forms in arbitrary dimension. Classifications under further restrictions on variable dependence appear in~\cite{CastanedaEssen,Yan2022}. The manuscript of He and Yan~\cite{HeYan} studies maps $(u(x,y),v(x,y,z),h(x,y,z))$ under the inequality $\deg_zv\geq2\deg_zh$, and also treats the case $\deg_zh\leq3$. Its Problem~4.1 asks whether a common normal form remains valid without these restrictions.

This paper develops a classification of normalized three-variable maps with independent components and nilpotent Jacobian, and studies its consequences for tame and stably tame automorphisms. The argument is based on the family
\begin{equation}\label{eq:canonical-intro}
 \Ncal_P(x,y,z)=\bigl(P(y+x^2),\ z-2xP(y+x^2),\ P(y+x^2)^2\bigr),
 \qquad P\in K[t]\setminus K,\quad P(0)=0.
\end{equation}
Its Jacobian is nilpotent, and its components are linearly independent. Moreover, if
\[
 (X,Y,Z)=(x,y,z)+s\Ncal_P(x,y,z),
\]
then the identity
\begin{equation}\label{eq:intro-invariant}
 Y+X^2-sZ=y+x^2
\end{equation}
immediately gives a polynomial inverse. The main task is to reduce an arbitrary normalized map with independent components to this family.

The argument has two parts. First, Theorem~\ref{thm:dependent-pair} gives independent constant linear forms $\ell_1,\ell_2$ for which $\ell_1(H)$ and $\ell_2(H)$ are algebraically dependent. To find them, we pass from the image field to its relative algebraic closure in $K(x,y,z)$. Over this field, the generic curve is contained in an affine plane and satisfies an equation of degree at most two. Homogeneous triangularization restricts its directions at infinity. We exclude the line case and the conic with two distinct points at infinity; the remaining conic yields the dependent pair. A quadratic relation among the components then yields descent to $K$.

Second, Proposition~\ref{prop:pair-normal} converts a dependent pair into~\eqref{eq:canonical-intro}. After writing the pair as $f(r),g(r)$, we use a rational derivation with a slice to control the third component. A unit calculation rules out nonlinear dependence on a second invariant. Two commuting affine locally nilpotent derivations then show that $r$ is a polynomial in a coordinate $y+Q(x,z)$, where $\deg Q\leq2$. The remaining nilpotence identity forces the quadratic part of $Q$ to have rank one. Constant linear changes of coordinates complete the normalization. These changes are defined over $K$ and do not require the extraction of roots.

The resulting classification is stated in Theorem~\ref{thm:classification}. It leads to Theorem~\ref{thm1}: for $H\in K[x,y,z]^3$ with nilpotent Jacobian, the map $\id+H$ is tame. In the independent case we give an elementary factorization valid for the whole family $\id+sH$ over $K[s]$. In the dependent case, a constant linear conjugation makes one component vanish. A primitive relation over $K[z]$ then produces a polynomial change of basis in the remaining two coordinates. This gives a tame factorization over $K[s]$ as well.

We next consider maps in dimension $n\geq6$ of the form
\begin{equation}\label{eq:intro-block}
 \widetilde H=\bigl(H_1(x_1,x_2,x_3),\ldots,H_{n-3}(x_1,x_2,x_3),
 H_{n-2}(x),H_{n-1}(x),H_n(x)\bigr).
\end{equation}
Two three-variable nilpotent Jacobians occur as diagonal blocks of $J\widetilde H$. Theorem~\ref{thm2} gives their normal forms, using the fraction field of the base ring for the last block. This operation must be distinguished from conjugation of the full map by a matrix with variable coefficients. The constant term of the last block with respect to its three variables must also be subtracted.

The stable-tameness conclusion does not require descent of these rational changes of coordinates. We first show that the last block defines an automorphism over the polynomial base ring. Every residue-field specialization is tame by Theorem~\ref{thm1}. The residue-field criterion of Berson, van den Essen, and Wright~\cite[Theorem~4.12]{BEW} then gives stable tameness over that ring. Together with the tame base map and Suslin's theorem on polynomial matrices, this proves Theorem~\ref{thm3} for~\eqref{eq:intro-block}.

Section~\ref{sec:preliminaries} introduces the notation and records the algebraic results used below. Section~\ref{sec:dependent-pair} contains the dependent-pair argument. Section~\ref{sec:classification} gives the classification and tame factorizations. Section~\ref{sec:blocks} treats the higher-dimensional extensions. We conclude in Section~\ref{sec:conclusion} with several further questions.

\section{Notation and preliminaries}\label{sec:preliminaries}

Throughout the paper, $K$ denotes a field of characteristic zero. Polynomial maps and tuples of variables are written as column vectors. For $H\in K[x_1,\ldots,x_n]^n$, we write
\[
 JH=\left(\frac{\partial H_i}{\partial x_j}\right)_{1\leq i,j\leq n}.
\]
Matrix ranks are computed over the corresponding rational function field. We write $\deg_{x_i}f$ for the degree in $x_i$ and $\deg f$ for total degree. The components of $H$ are \emph{linearly independent} if no nonzero constant linear form annihilates $H$. A polynomial map $H$ is called \emph{normalized} if $H(0)=0$. For a map over a polynomial coefficient ring, normalization in specified polynomial variables means that the constant term in those variables is zero in the coefficient ring.

\subsection{Polynomial automorphisms}
Let $R$ be a commutative ring with identity. We denote by $\GA_n(R)$ the group of polynomial automorphisms of $R[x_1,\ldots,x_n]$, represented as polynomial maps and composed by substitution. An elementary automorphism has the form
\[
 e_i(f)=(x_1,\ldots,x_{i-1},x_i+f,x_{i+1},\ldots,x_n),
 \qquad f\in R[x_1,\ldots,\widehat{x_i},\ldots,x_n].
\]
The subgroup generated by these maps is $\EA_n(R)$. Constant translations are products of elementary automorphisms.

\begin{definition}\label{def:tame}
The tame subgroup is
\[
 \TA_n(R)=\langle\GL_n(R),\EA_n(R)\rangle\subseteq\GA_n(R).
\]
An automorphism $F\in\GA_n(R)$ is \emph{tame over $R$} if $F\in\TA_n(R)$. It is \emph{stably tame over $R$} if, for some $m\geq0$,
\[
 F^{[m]}=(F_1,\ldots,F_n,x_{n+1},\ldots,x_{n+m})\in\TA_{n+m}(R).
\]
\end{definition}

Equivalently, the tame group is generated by affine and elementary automorphisms. In particular, a triangular automorphism with unit diagonal coefficients is tame. When $R$ is itself a polynomial ring, tameness over $R$ treats the variables of the coefficient ring as coefficients. We use this distinction in Section~\ref{sec:blocks}.

\subsection{Nilpotence and linear conjugation}
For a $3\times3$ matrix $N$ over a characteristic-zero domain, let $\sigma_2(N)$ denote the sum of its principal minors of order two. The characteristic polynomial yields
\begin{equation}\label{eq:nilpotence}
 N\text{ is nilpotent}
 \quad\Longleftrightarrow\quad
 \tr N=\sigma_2(N)=\det N=0.
\end{equation}
In this case $N^3=0$. The stronger identity $N^2=0$ is not assumed. If $JH$ is nilpotent, then $\det(I+sJH)=1$ for an indeterminate $s$.

For $T\in\GL_n(K)$, set $H^T=T\circ H\circ T^{-1}$. Then
\begin{equation}\label{eq:constant-conjugacy}
 JH^T(x)=T(JH)(T^{-1}x)T^{-1}.
\end{equation}
Thus nilpotence, rank, normalization at the origin, and linear independence of the components are preserved. Formula~\eqref{eq:constant-conjugacy} requires a constant matrix. Nonlinear coordinates below are used either as auxiliary polynomial coordinates or as explicit factors of $\id+sH$.

We use two known triangularization results. A normalized map whose nilpotent Jacobian has rank at most one has linearly dependent components; see~\cite{BCW} or~\cite[Theorems~2.3 and~3.4]{deBondtYan2014}. A homogeneous map in three variables with nilpotent Jacobian is linearly triangularizable~\cite{deBondtEssen}. Consequently, a nonzero homogeneous map $G\in K[x,y,z]^3$ of degree $d\geq1$ with nilpotent Jacobian can be put in the form
\begin{equation}\label{eq:homogeneous-form}
 G=(0,\gamma x^d,B(x,y)),
\end{equation}
where $B$ is homogeneous of degree $d$. For $d=1$, this is ordinary triangularization of a nilpotent matrix.

\subsection{Derivations and slices}
Let $k$ be a field and let $\mathcal A$ be a commutative $k$-algebra. A \emph{$k$-derivation} of $\mathcal A$ is a $k$-linear map $\delta\colon\mathcal A\to\mathcal A$ satisfying the Leibniz rule
\[
 \delta(fg)=\delta(f)g+f\delta(g)\qquad(f,g\in\mathcal A).
\]
Its \emph{ring of constants} is $\ker\delta=\{f\in\mathcal A:\delta(f)=0\}$; this is a field when $\mathcal A$ is a field. An element $s\in\mathcal A$ is a \emph{slice} for $\delta$ if $\delta(s)=1$. A derivation admitting such an element is called a \emph{derivation with a slice}. For example, $\partial_x$ on $k[x,y,z]$ or $k(x,y,z)$ has the slice $x$.
A derivation $\delta$ is \emph{locally nilpotent} if, for every $f\in\mathcal A$, there is an integer $j\geq1$ such that $\delta^j(f)=0$. Having a slice does not by itself imply local nilpotence. 

\subsection{Closed polynomials and relative constants}
Let $k$ be any field.
\begin{definition}\label{def:closed}
A nonconstant polynomial $r\in k[x_1,\ldots,x_n]$ is \emph{closed} if $k(r)$ is relatively algebraically closed in $k(x_1,\ldots,x_n)$.
\end{definition}

In characteristic zero, this is equivalent to $k[r]$ being integrally closed in the ambient polynomial ring~\cite[Lemma~3]{ArzhantsevPetravchuk}. We use the closed-generator theorem: a finite family of polynomials generating a field of transcendence degree one is contained in $k[r]$ for a closed polynomial $r$; see~\cite{ArzhantsevPetravchuk}. Indeed, choose a generative polynomial for one nonconstant member. Every other member belongs to $k(r)$ by relative algebraic closedness, and
\begin{equation}\label{eq:intersection}
 k(r)\cap k[x_1,\ldots,x_n]=k[r].
\end{equation}
For completeness, write a rational function as $A(r)/B(r)$ with coprime $A,B\in k[t]$. A B\'ezout identity for these polynomials remains valid after substitution. If the quotient is polynomial, $B(r)$ divides $A(r)$ in the ambient polynomial ring, so $B(r)$ is a unit. Hence $B$ is constant. This last argument does not require closedness.

The following lemma collects the field-theoretic facts used below and specifies the constant field in the generic-fiber argument.

\begin{lemma}\label{lem:field-facts}
The following statements hold in characteristic zero.
\begin{enumerate}
\item If $\mathcal K/E$ is finitely generated and $L$ is the relative algebraic closure of $E$ in $\mathcal K$, then $L/E$ is finite and $\mathcal K/L$ is regular. Every finitely generated $L$-subalgebra of $\mathcal K$ is geometrically integral.
\item A derivation of a field extends uniquely to a finite separable extension. The extensions of commuting derivations commute.
\item Suppose $\mathcal K/k(t_1,\ldots,t_m)$ is finite separable, and extend the coordinate derivations to $\mathcal K$. If $\bar{a}\in \mathcal K$ is killed by $\partial_{t_i}$ for $i\in I$, then $\bar{a}$ is algebraic over $k(t_j:j\notin I)$.
\end{enumerate}
\end{lemma}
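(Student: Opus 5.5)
We treat the three items separately, since each reduces to standard field theory in characteristic zero.

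For (i), first note that $L$ is algebraic over $E$ and contained in the finitely generated extension $\mathcal K$. Choose a transcendence basis $t_1,\ldots,t_m$ of $\mathcal K/E$. Then $L(t_1,\ldots,t_m)$ is a subfield of the finite extension $\mathcal K/E(t_1,\ldots,t_m)$, hence finite over $E(t)$. Because the $t_i$ remain algebraically independent over $L$, we get $[L:E]=[L(t):E(t)]\le[\mathcal K:E(t)]<\infty$. Regularity of $\mathcal K/L$ means that $L$ is algebraically closed in $\mathcal K$ and that $\mathcal K/L$ is separable. The first property is the definition of $L$; the second is automatic in characteristic zero. The standard criterion (Bourbaki, or Lang, \emph{Algebra}, VIII) then gives that $\mathcal K\otimes_L\overline L$ is a domain. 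If $A\subseteq\mathcal K$ is a finitely generated $L$-subalgebra, then $A\otimes_L\overline L\hookrightarrow\mathcal K\otimes_L\overline L$ by flatness over the field $L$. Hence $A\otimes_L\overline L$ is a domain, which is geometric integrality.

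For (ii), let $\mathcal K'=\mathcal K(\alpha)$ with $\alpha$ separable, and let $p$ be its minimal polynomial. Write $p^\delta$ for $p$ with $\delta$ applied to its coefficients. Any extension $\tilde\delta$ must satisfy $0=\tilde\delta(p(\alpha))=p^\delta(\alpha)+p'(\alpha)\tilde\delta(\alpha)$. Since $p'(\alpha)\ne0$, this forces $\tilde\delta(\alpha)=-p^\delta(\alpha)/p'(\alpha)$, which gives uniqueness. For existence, define $\tilde\delta$ on $\mathcal K[X]$ by acting on coefficients and sending $X$ to $-p^\delta(X)p'(X)^{-1}$, where $p'(X)^{-1}$ is taken modulo $p$. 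One checks that $\tilde\delta$ preserves the ideal $(p)$, so it descends to $\mathcal K'$. For two commuting derivations $\delta_1,\delta_2$, the commutator $[\tilde\delta_1,\tilde\delta_2]$ is a derivation of $\mathcal K'$ that vanishes on $\mathcal K$. By uniqueness applied to the zero derivation, it vanishes identically.

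For (iii), which we expect to be the only step needing care, set $F_0=k(t_j:j\notin I)$ and let $\overline{F_0}$ be the relative algebraic closure of $F_0$ in $\mathcal K$. We argue by contraposition. Suppose $\bar a$ is transcendental over $F_0$. Since $\mathcal K$ is algebraic over $F_0(t_i:i\in I)$, an exchange argument lets us replace some $t_{i_0}$ with $i_0\in I$ by $\bar a$. The resulting family $\bar a,\ t_i\ (i\in I\setminus\{i_0\}),\ t_j\ (j\notin I)$ is again a transcendence basis, and $\mathcal K$ is finite separable over the field it generates. Over $k(t_1,\ldots,t_m)$, the derivations $\partial_{t_i}$ for $i\in I$ kill $F_0$ and kill $\bar a$. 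Each of them therefore kills the subfield $M=F_0(\bar a,t_i:i\in I\setminus\{i_0\})$. By uniqueness in (ii), applied to the finite separable extension $\mathcal K/M$, each such derivation is zero on all of $\mathcal K$. This contradicts $\partial_{t_{i_0}}(t_{i_0})=1$. Hence $\bar a$ is algebraic over $F_0$.

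The main point to verify is that $\mathcal K/M$ is genuinely finite separable, so that uniqueness applies. Finiteness holds because the family generating $M$ is a transcendence basis of the finitely generated extension $\mathcal K/k$. Separability is automatic in characteristic zero.
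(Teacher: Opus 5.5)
Your treatment of (i) and (ii) matches the paper's. For (i) you use the same degree bound $[L:E]\le[\mathcal K:E(t_1,\ldots,t_r)]$; the paper phrases it through finite subextensions $L_0\subseteq L$, which avoids comparing possibly infinite degrees, but your linear-disjointness equality is also valid. You then reduce geometric integrality to $\mathcal K\otimes_L\overline L$ being a domain plus flatness, as the paper does. For (ii) you use the same minimal-polynomial formula and the same commutator argument, and you give more detail on existence than the paper.

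For (iii) you take a genuinely different route. The paper argues directly. Let $P$ be the monic minimal polynomial of $\bar a$ over $k(t_1,\ldots,t_m)$. Applying $\partial_{t_i}$ ($i\in I$) to $P(\bar a)=0$ shows that the coefficientwise derivative of $P$ has smaller degree and still annihilates $\bar a$, so it is zero. Hence the coefficients of $P$ lie in the common kernel $k(t_j:j\notin I)$. You instead argue by contraposition: the exchange lemma gives a transcendence basis containing $\bar a$, and the uniqueness part of (ii) gives the contradiction. Your route needs only (ii) and transcendence-basis bookkeeping. The paper's route is shorter and proves more: the minimal polynomial over $k(t_1,\ldots,t_m)$ already has coefficients in $k(t_j:j\notin I)$. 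That is the form in which the paper spells the argument out again in Section~\ref{sec:dependent-pair} to obtain a relation over $k$.

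There is one slip in your (iii). It is not true that every $\partial_{t_i}$ with $i\in I$ kills $M=F_0(\bar a,t_i:i\in I\setminus\{i_0\})$. For $i\in I\setminus\{i_0\}$ one has $\partial_{t_i}(t_i)=1$, and these derivations are not zero on $\mathcal K$. Only $\partial_{t_{i_0}}$ kills $M$, since it kills $F_0$, $\bar a$, and every $t_i$ with $i\neq i_0$. That is the only derivation your contradiction $\partial_{t_{i_0}}(t_{i_0})=1$ uses. Once you restrict the claim to $\partial_{t_{i_0}}$, the argument is correct.
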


\begin{proof}
For (i), choose a transcendence basis $t_1,\ldots,t_r$ of $\mathcal K/E$ and put $d=[\mathcal K:E(t_1,\ldots,t_r)]$. Every finite intermediate extension $L_0/E$ contained in $L$ satisfies
\[
 [L_0:E]=[L_0(t_1,\ldots,t_r):E(t_1,\ldots,t_r)]\leq d.
\]
A finite subextension of maximal degree therefore contains every element of $L$, which proves finiteness. If an irreducible polynomial over $L$ factors into monic factors over $\mathcal K$, the coefficients of those factors lie in $\mathcal K$ and are algebraic over $L$. They therefore belong to $L$, contradicting irreducibility. Since finite extensions in characteristic zero are separable and simple, $\mathcal K$ is linearly disjoint from every finite extension of $L$. Hence $\mathcal K\otimes_L\overline L$ is a domain. Tensoring an inclusion of an $L$-subalgebra into $\mathcal K$ preserves injectivity and proves geometric integrality.

For (ii), if $\bar{a}$ has minimal polynomial $P(T)=\sum_j c_jT^j$, an extension of $\delta$ must satisfy
\[
 \delta \bar{a}=-\frac{\sum_j(\delta c_j)\bar{a}^j}{P'(\bar{a})}.
\]
Separability makes the denominator nonzero and gives existence and uniqueness. The commutator of two extended derivations is a derivation that vanishes on the base field and is therefore zero.

For (iii), differentiate the monic minimal polynomial of $\bar{a}$ over $k(t_1,\ldots,t_m)$. If $\partial_{t_i}\bar{a}=0$, the differentiated polynomial has smaller degree and still annihilates $\bar{a}$, so it must be zero. Thus its coefficients are killed by every indicated coordinate derivation. Their common constants in the rational function field are $k(t_j:j\notin I)$, which proves the assertion.
\end{proof}

The base field is relatively algebraically closed in a purely transcendental extension. In particular, the common constants of all extended coordinate derivations on a subfield of $k(x,y,z)$ are $k$ whenever the preceding lemma applies.

\section{A dependent pair of components}\label{sec:dependent-pair}

The first step is to find two independent constant linear combinations of the components that are algebraically dependent. The following theorem, proved in the remainder of this section, establishes this reduction.

\begin{theorem}[Dependent pair]\label{thm:dependent-pair}
Let $H\in K[x,y,z]^3$ satisfy $H(0)=0$. Suppose that $JH$ is nilpotent and that the components of $H$ are linearly independent over $K$. Then there exist linearly independent forms $\ell_1,\ell_2\in(K^3)^*$ such that $\ell_1(H)$ and $\ell_2(H)$ are algebraically dependent over $K$.
\end{theorem}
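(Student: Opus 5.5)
We follow the strategy sketched in the introduction. Put $E=K(H_1,H_2,H_3)\subseteq\mathcal K=K(x,y,z)$. The first step is to show that $E$ has transcendence degree exactly two over $K$. Since $\det JH=0$, the transcendence degree is at most two. If it were at most one, the rank of $JH$ would be at most one, and the components of the normalized map $H$ would be linearly dependent by the rank-one result quoted in Section~\ref{sec:preliminaries}. This contradicts the hypothesis.

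Let $L$ be the relative algebraic closure of $E$ in $\mathcal K$. By Lemma~\ref{lem:field-facts}(i), $L/E$ is finite and $\mathcal K/L$ is regular, so $L[x,y,z]$ defines a geometrically integral affine curve $C$ over $L$, the generic fiber. It suffices to find independent $\ell_1,\ell_2\in(K^3)^*$ with $\ell_1(H),\ell_2(H)$ algebraically dependent. Over $L$ this is a statement about the $K$-linear span of $H$ restricted to $C$.

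The geometric core uses the identity $\det(I+sJH)=1$ and nilpotence along the curve. The tangent vector field $v=\nabla H_1\times\nabla H_2$ (or a nonzero combination of such cross products) spans $\ker JH$ generically. It is therefore tangent to the fibres, and $C$ is an integral curve of $v$. From $(JH)^3=0$ one gets that $\operatorname{im}JH$ and $\ker JH$ are nested. I would then show that $v$ is proportional to a constant direction modulo $L$. More precisely, I would show that $C$ lies in an affine plane over $L$ and satisfies an equation of degree at most two there. The approach is to differentiate the relations $H_i=\text{const}$ along $v$ and use $\sigma_2(JH)=0$ to force the second derivative of the parametrization into a fixed plane. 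I expect this to be the main obstacle: extracting ``plane curve of degree $\leq2$'' from nilpotence requires controlling $JH$ along $C$ with coefficients in $L$, not just $E$.

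Given this, examine the points of $C$ at infinity. Take the leading homogeneous part $G$ of $H$. By \eqref{eq:homogeneous-form}, after a constant linear change we have $G=(0,\gamma x^d,B(x,y))$, and this restricts the asymptotic directions of $C$. Then rule out the cases one by one.
\begin{enumerate}
\item If $C$ is a line with direction $w$ over $L$, then $H$ is constant along $w$. Applying Lemma~\ref{lem:field-facts}(iii) and the homogeneous form should show that $w$ can be chosen over $K$. This yields a linear dependence of $H$, a contradiction.
\item If $C$ is a conic with two distinct points at infinity, use the triangular shape of $G$, whose kernel direction is forced to be unique. This contradicts having two asymptotic directions.
\item The remaining case is a parabola, with one point at infinity in the plane.
\end{enumerate}

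In the parabola case, the plane containing $C$ gives a linear relation $\ell(x,y,z)=c$ with $c\in L$, and the parabola equation gives a quadratic relation. Comparing with the invariant \eqref{eq:intro-invariant} of the model, one expects two constant linear combinations of the components to lie in $L$ and satisfy a quadratic polynomial relation over $K$. To descend this to $K$, write the relation in terms of $H$ and take traces or norms from $L$ to $E$. Since $[L:E]<\infty$, this produces a nontrivial polynomial relation over $K$ between $\ell_1(H)$ and $\ell_2(H)$. The final check is that $\ell_1,\ell_2$ remain linearly independent, which follows from the linear independence of the components of $H$.
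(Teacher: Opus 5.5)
Your outline has the same architecture as the paper: the generic curve over the relative algebraic closure $L$, directions at infinity constrained by the leading form, the three cases, then descent. However, the steps that carry the weight are either missing or would not work as sketched.

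\textbf{The plane and conic step is missing.} You leave open the claim that the generic curve lies in an affine $L$-plane and on a conic, and that claim is the heart of the proof. Your kernel field $\nabla u\times\nabla v=(C_1,C_2,C_3)$ is the right object. As a derivation it is $D(f)=\det J(u,v,f)$, and $\ker_{\mathcal K}D=L$. What is missing is the observation that the nilpotence identities are themselves $D$-derivatives of expressions with coefficients in $L$.
\begin{itemize}
\item Set $p=w_u$ and $q=w_v$, computed in $L$ via the extended derivations $\partial_u,\partial_v$. Then $\sigma_2(JH)=C_3-pC_1-qC_2=D(z-px-qy)$, so $s=z-px-qy\in L$. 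This gives the plane.
\item Differentiate that identity and substitute into the trace. With $a=w_{uu}$, $b=w_{uv}$, $c=w_{vv}$ this gives $\tr JH=-D(\tau)$, where $\tau=\tfrac12ax^2+bxy+\tfrac12cy^2+s_ux+s_vy$. So $\tau\in L$, which gives the conic.
\item Linear independence is then needed to show $(a,b,c)\neq0$.
\end{itemize}
The whole case analysis is expressed through $a,b,c$, so without this step it has nothing to act on.

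\textbf{The case analysis would not go through as sketched.}
\begin{itemize}
\item In (ii), the leading form $G=(0,\gamma x^d,B(x,y))$ only confines directions at infinity to finitely many constant planes. For instance, $G=(0,0,xy)$ has nilpotent Jacobian and zero set $\{x=0\}\cup\{y=0\}$. Each constant plane meets the non-constant direction plane $\mathcal T$ in a line. So two asymptotic directions lying in two different constant planes are not contradicted by $G$ at all. The paper uses the two constant forms $m_1,m_2$ as new $u,v$, reads off $a=c=0$ and $b=\gamma\in k^*$, and integrates to $w=\gamma uv+\alpha u+\beta v+\delta$. It then excludes this by the $z$-degree computation of Lemma~\ref{lem:bilinear-exclusion}.
\item In (i), the leading form only puts the line's direction in a constant plane. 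The paper then needs $x\in L$, a closed generator of degree one in $y,z$ over $k(x)$, and the trace equation to force $\rho'=0$. Only then does the planar dependence result apply.
\item In (iii), ``one expects'' must be replaced by the actual mechanism. Take $u=m(H)$ with $m(e)=0$. Then the conic has $b=c=0$, so $w_v$ is killed by $\partial_u$ and $\partial_v$ and equals a constant $\mu$. Hence $w-\mu v$ is algebraic over $k(u)$ by Lemma~\ref{lem:field-facts}(iii).
\end{itemize}

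\textbf{The descent addresses the wrong extension.} The relation obtained is already over the constant field, so norms from $L$ to $E$ are not needed. The real issue is that the forms $m_j$ come from linear factors of $B$ and from points over $\overline L$. They are therefore defined only over $\Kbar$, and your argument must pass to $\Kbar$ and come back. The paper does this via the one-dimensional space $V_K$ of relations of degree at most two, which relies on Proposition~\ref{prop:pair-normal} over $\Kbar$. A shorter route is to show that the plane $\langle\ell_1,\ell_2\rangle$ is unique. Two such planes would both contain the left-kernel line of $JH$ after extending scalars to $\mathcal K$. That line would then be spanned by a constant form $\ell$ with $\ell(H)=0$, contradicting independence. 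So the plane is Galois-stable and hence defined over $K$.
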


We first work over an algebraically closed field $k$. Since $\det JH=0$, the Jacobian rank is at most two. The dependence result recalled in Section~\ref{sec:preliminaries} excludes rank at most one. Hence $\rank JH=2$. After a simultaneous permutation of source and target coordinates, write $H=(u,v,w)$ with $u,v$ algebraically independent. Set
\[
 \mathcal K=k(x,y,z),\qquad E=k(u,v,w),
\]
and let $L$ be the relative algebraic closure of $E$ in $\mathcal K$. Then $L/k(u,v)$ is finite separable. The derivations $\partial_u,\partial_v$ extend uniquely to $L$ and commute. Put
\begin{equation}\label{eq:surface-derivatives}
 p=w_u,\quad q=w_v,\qquad a=w_{uu},\quad b=w_{uv},\quad c=w_{vv}.
\end{equation}

\subsection{The embedded generic curve}
Consider the Jacobian derivation
\[
 D(f)=\det J(u,v,f),\qquad (C_1,C_2,C_3)=\nabla u\times\nabla v.
\]
By the differential criterion for algebraic dependence,
\begin{equation}\label{eq:kernel-D}
 \ker_{\mathcal K}D=L.
\end{equation}
Indeed, $D(f)=0$ is equivalent to $f$ being algebraic over $k(u,v)$, and $E/k(u,v)$ is algebraic.

Since $\dd w=p\,\dd u+q\,\dd v$, the principal-minor identity is
\begin{equation}\label{eq:plane-sigma}
 \sigma_2(JH)=C_3-pC_1-qC_2.
\end{equation}
The right-hand side is $D(z-px-qy)$, because $p,q\in L$. Thus
\begin{equation}\label{eq:plane-s}
 s=z-px-qy\in L.
\end{equation}
Differentiating this identity, and setting
\[
 A=ax+by+s_u,\qquad B=bx+cy+s_v,
\]
we obtain
\begin{equation}\label{eq:plane-differentials}
 Au_x+Bv_x=-p,\qquad Au_y+Bv_y=-q,\qquad Au_z+Bv_z=1.
\end{equation}
These equations imply
\[
 \tr JH=u_x+v_y+pu_z+qv_z=-AC_1-BC_2.
\]
The coefficients of $A,B$, viewed as affine expressions in $x,y$, lie in $L$. It follows that
\begin{equation}\label{eq:quadratic-tau}
 \tau=\tfrac12ax^2+bxy+\tfrac12cy^2+s_ux+s_vy\in L.
\end{equation}

\begin{proposition}\label{prop:line-conic}
Let $\mathcal C=\Spec L[x,y,z]$, where $L[x,y,z]$ denotes the subalgebra of $\mathcal K$ generated by the source coordinates. Then $\mathcal C$ is a geometrically integral embedded affine line or plane conic. In the conic case its points at infinity have directions
\begin{equation}\label{eq:infinity-directions}
 e=(\xi,\eta,p\xi+q\eta),\qquad a\xi^2+2b\xi\eta+c\eta^2=0,
\end{equation}
over $\overline L$.
\end{proposition}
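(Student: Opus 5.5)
Geometric integrality of $\mathcal C$ comes directly from Lemma~\ref{lem:field-facts}(i). The algebra $L[x,y,z]$ is a finitely generated $L$-subalgebra of $\mathcal K$, and $\mathcal K/L$ is regular, so $\mathcal C$ is geometrically integral. Its dimension is one, because $\operatorname{trdeg}_L\mathcal K=3-2=1$ and $\Frac L[x,y,z]=L(x,y,z)=\mathcal K$ (the latter holds since $L\subseteq\mathcal K$). It remains to show that $\mathcal C$ sits inside a plane and is cut out there by an equation of degree at most two.

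\textbf{Step 1: the plane.} By \eqref{eq:plane-s}, the relation $z=px+qy+s$ holds in $\mathcal K$ with $p,q,s\in L$. So $L[x,y,z]=L[x,y]$, and $\mathcal C$ lies in the affine plane $z-px-qy-s=0$ of $\mathbb A^3_L$.

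\textbf{Step 2: the equation.} By \eqref{eq:quadratic-tau}, the polynomial
\[
\Phi(X,Y)=\tfrac12aX^2+bXY+\tfrac12cY^2+s_uX+s_vY-\tau\in L[X,Y]
\]
vanishes at $(x,y)$, so $\mathcal C$ is contained in the plane curve $\Phi=0$. There are two cases.
\begin{enumerate}
\item If $\Phi$ is not a constant, then $\mathcal C$ is an integral one-dimensional closed subscheme of $\{\Phi=0\}$. The coordinate ring $L[X,Y]/(\Phi)$ has dimension one. Because $\mathcal C$ is geometrically integral, it is the zero set of one irreducible factor of $\Phi$. That factor has degree $1$ or $2$, so $\mathcal C$ is a line or a conic.
\item If $\Phi$ is a constant, then $a=b=c=0$ and $s_u=s_v=0$, and then $\tau=0$. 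This case must be ruled out or reduced to a line. Here $A$ and $B$ are constants of $\partial_u,\partial_v$ on $L$, so by Lemma~\ref{lem:field-facts}(iii) they are algebraic over $k$, hence in $k$. Equation \eqref{eq:plane-differentials} then gives $d(Au+Bv)=-p\,dx-q\,dy+dz$. It follows that $p,q$ are constant and $Au+Bv=z-px-qy+\text{const}$, which makes $\mathcal C$ a line: the constants $x$ and $y$ are tied by a linear relation over $L$. I expect this degenerate case to be the main bookkeeping obstacle. The same argument also covers the situation where the quadratic part of $\Phi$ vanishes but the linear part does not, which again gives a line.
\end{enumerate}

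\textbf{Step 3: directions at infinity.} In the conic case, the points at infinity are the zeros over $\overline L$ of the quadratic part $\tfrac12(a\xi^2+2b\xi\eta+c\eta^2)$ of $\Phi$. They must be computed in the embedded plane. The plane is a graph over $(x,y)$ with linear part $z=p x+q y$, so a direction $(\xi,\eta)$ in the $(x,y)$-plane lifts to the direction $(\xi,\eta,p\xi+q\eta)$ in $\mathbb A^3$. This is exactly \eqref{eq:infinity-directions}.

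In the conic case the quadratic part is nonzero; otherwise $\Phi$ would have degree at most one and we would be in the line case. The curve is irreducible and equals $\{\Phi=0\}$ up to a scalar, so these directions are exactly the points of its closure at infinity.
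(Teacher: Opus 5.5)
Your overall route is the paper's: geometric integrality from Lemma~\ref{lem:field-facts}(i), the plane from $s\in L$, the quadric $\Phi$ from $\tau\in L$, a height-one prime of $L[X,Y]$ generated by an irreducible factor of $\Phi$, and homogenization for the directions at infinity. There is one genuine gap, in your case (ii), and the argument needs that case settled to know that $\Phi\neq0$.

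\textbf{The gap.} If $\Phi$ is constant, then $\Phi$ is the zero polynomial, because $\Phi(x,y)=0$. In that case $A=ax+by+s_u$ and $B=bx+cy+s_v$ are identically $0$, not merely constants of $\partial_u,\partial_v$. Your identity $\dd(Au+Bv)=\dd z-p\,\dd x-q\,\dd y$ then reads $0=\dd z-p\,\dd x-q\,\dd y$. This is impossible, since the coefficient of $\dd z$ is $1$; equivalently, the third equation of \eqref{eq:plane-differentials} becomes $0=1$. So the case is contradictory.

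\textbf{Why your conclusion fails.} You conclude instead that $\mathcal C$ is a line because ``$x$ and $y$ are tied by a linear relation over $L$''. That does not follow. Since $u,v\in L$, the relation $Au+Bv=z-px-qy+\text{const}$ only restates $s\in L$. It gives no relation between $x$ and $y$ over $L$.

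\textbf{The repair.} Note that $A$ and $B$ are $\Phi_X$ and $\Phi_Y$ evaluated at $(x,y)$. The equation $Au_z+Bv_z=1$ shows they cannot both vanish, so $\Phi$ is nonconstant.

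\textbf{How the paper handles this.} The paper proves the stronger fact $(a,b,c)\neq0$. Otherwise $\partial_u,\partial_v$ kill $p$ and $q$, so $p,q\in k$, and likewise $w-pu-qv\in k$. Evaluating at the origin then gives a linear relation among the components, contradicting independence. This shows that $\Phi$ always has a nonzero quadratic part. For the proposition as stated, your weaker conclusion $\Phi\neq0$ suffices, with a degree-one $\Phi$ falling into the line case as you say.

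\textbf{A minor point.} In case (i), what you actually use is that $\mathcal C$ is integral of dimension one. That makes the kernel of $L[X,Y]\to\mathcal K$ a height-one prime, hence principal, generated by an irreducible factor of $\Phi$. Geometric integrality is not what is needed at that step.
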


\begin{proof}
This algebra has fraction field $\mathcal K$ and dimension one over $L$. It is geometrically integral by Lemma~\ref{lem:field-facts}. Equations~\eqref{eq:plane-s} and~\eqref{eq:quadratic-tau} place the curve in the plane $Z=pX+qY+s$ and on the quadratic
\[
 \Phi(X,Y)=\tfrac12aX^2+bXY+\tfrac12cY^2+s_uX+s_vY-\tau.
\]
The coefficients $a,b,c$ are not all zero. Otherwise, both coordinate derivations on $L$ annihilate $p,q$, so they belong to $k$. The same argument gives $w-pu-qv\in k$. Evaluating at the origin then contradicts linear independence.

The kernel of $L[X,Y]\to \mathcal K$, $X\mapsto x$, $Y\mapsto y$, is a height-one prime containing the nonzero polynomial $\Phi$. It is generated by an irreducible factor of $\Phi$, of degree one or two. If the degree is two, that factor is a scalar multiple of $\Phi$. Homogenizing the plane and quadratic equations gives~\eqref{eq:infinity-directions}.
\end{proof}

The direction space of the containing affine plane is
\begin{equation}\label{eq:tangent-plane}
 \mathcal T=\{(\xi,\eta,p\xi+q\eta):\xi,\eta\in L\}.
\end{equation}
After extending scalars to $\mathcal K$, this is the column space of $JH$. No nonzero constant linear form vanishes on $\mathcal T$, since such a form would satisfy $\ell JH=0$, and hence $\ell(H)=0$ by normalization.

The construction is compatible with constant conjugation. For $S\in\GL_3(k)$, regard $x'=Sx$ and $H'=SH$ as elements of the same field $\mathcal K$. Then $k(H')=E$, $L[x']=L[x]$, and the Jacobian in the new coordinates is $S(JH)S^{-1}$. The transformation $S$ acts on the embedded curve, its directions, and $\mathcal T$. We may therefore repeat the construction with any algebraically independent pair of the new components.

\subsection{Constant planes at infinity}
Let $H_d\neq0$ be the highest homogeneous part of $H$. Taking the highest homogeneous part of the identity $(JH)^3=0$ gives $(JH_d)^3=0$. By~\eqref{eq:homogeneous-form}, a constant conjugation puts $H_d$ in the form $(0,\gamma x^d,B(x,y))$. If $\gamma\neq0$, its common zero set is contained in $x=0$. If $\gamma=0$, the nonzero binary form $B$ factors into linear forms over $k$. In either case there are finitely many nonzero constant linear forms $m_1,\ldots,m_N$ such that
\begin{equation}\label{eq:finite-planes}
 H_d(e)=0\quad\Longrightarrow\quad\prod_{j=1}^N m_j(e)=0.
\end{equation}
This implication holds over every extension of $k$.

The polynomial $H_i(X,Y,Z)-H_i(x,y,z)$, with coefficients in $L$, vanishes on $\mathcal C$. Homogenizing to the common degree $d$ shows that every point at infinity of its projective closure satisfies $H_d(e)=0$. Therefore every direction at infinity lies in one of the constant planes in~\eqref{eq:finite-planes}.

\subsection{Excluding a line}
Suppose that $\mathcal C$ is a line. Choose a constant form $m$ that vanishes on its direction and use $m$ as the first row of a simultaneous change of coordinates. The new coordinate $x$ is constant on the geometric line, so it is algebraic over $L$. Relative algebraic closedness gives $x\in L$.

Put $k_0=k(x)$. As elements of $k_0[y,z]$, the components of $H$ generate a field of transcendence degree one over $k_0$. Choose a closed generator $r\in k_0[y,z]$. Then
\begin{equation}\label{eq:line-field}
 L=k_0(r).
\end{equation}
Indeed, $r$ is algebraic over $E(x)$ and thus belongs to $L$. Conversely, every element of $L$ is algebraic over $k_0(r)$, which is relatively algebraically closed in $\mathcal K$.

The kernel defining the curve in the $Y,Z$ coordinates is generated by $r(Y,Z)-r$. To see this, first use the prime ideal $(r(Y,Z)-R)$ in $k_0[R,Y,Z]$ and then localize at $k_0[R]\setminus\{0\}$. Because the curve is a line, $r$ has degree one in $Y,Z$. We can write
\[
 r=a(x)y+b(x)z+c(x),\qquad a(x),b(x),c(x)\in k(x).
\]
If $a(x)=0$, all components of $H$ are independent of $y$. Otherwise, replace the generator by $s=y-\rho(x)z$ and write
\[
 H=(f(x,s),g(x,s),h(x,s)),\qquad f,g,h\in k(x)[s].
\]
The trace equation, in the independent variables $x,s,z$, becomes
\[
 f_x+g_s-\rho h_s-\rho' zf_s=0.
\]
Consequently $\rho'f_s=0$. If $\rho'\neq0$, then the first component in the source coordinates belongs to $k(x)\cap k[x,y,z]=k[x]$. Its Jacobian row is $(f'(x),0,0)$, and nilpotence forces $f'=0$. Normalization gives $f=0$, contrary to independence. Thus $\rho\in k$.

In either case, $H$ is invariant under translation in a fixed nonzero source direction. A constant conjugation sends that direction to the $z$-axis, giving a map $(A(x,y),B(x,y),C(x,y))$. The planar Jacobian of $(A,B)$ is nilpotent. The planar dependence result recalled in Section~\ref{sec:preliminaries} gives a constant relation between $A,B$, again contradicting independence. Hence $\mathcal C$ is a conic.

\subsection{Excluding two points at infinity}
Suppose that the conic has two distinct directions $e_1,e_2$ over $\overline L$. Choose constant forms $m_1,m_2$ with $m_i(e_i)=0$. Their restrictions to $\mathcal T$ are nonzero. On $\mathcal T\otimes_L\overline L$, their kernels are the distinct lines spanned by $e_1,e_2$, so the restrictions are linearly independent over $k$. Hence $u=m_1(H)$ and $v=m_2(H)$ are algebraically independent. Use their restrictions to $\mathcal T$ as the first two rows of a simultaneous coordinate change and complete the third row.

The projected directions in the new $x,y$ plane are $[0:1]$ and $[1:0]$. Recomputing~\eqref{eq:infinity-directions} gives $a=c=0$ and $b\neq0$. Commutation of derivatives gives $b_u=b_v=0$, and therefore $b=\gamma\in k^*$. Integration twice in the function field, (3.1) gives
\begin{equation}\label{eq:bilinear-graph}
 w=\gamma uv+\alpha u+\beta v+\delta,\qquad \alpha,\beta,\delta\in k.
\end{equation}
The following lemma excludes this possibility.

\begin{lemma}\label{lem:bilinear-exclusion}
A polynomial map $(u,v,\gamma uv+\alpha u+\beta v+\delta)$ with $\gamma\neq0$ cannot have both a nilpotent Jacobian and Jacobian rank two.
\end{lemma}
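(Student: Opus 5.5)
We would argue by contradiction, working directly with the two Jacobian identities that remain after the shape of $w$ is fixed. First we normalize. Adding constants to the target does not change $JH$. Constant conjugation by $\diag(1,1,\lambda)$, or by a matrix subtracting multiples of the third target row, keeps nilpotence and rank. So we may replace $u,v$ by $u+\beta/\gamma$, $v+\alpha/\gamma$ and rescale. This reduces us to
\[
 H=(u,v,uv+c),\qquad c\in k,
\]
with $u,v$ nonconstant and algebraically independent. Then $\nabla w=v\nabla u+u\nabla v$. Since $\det JH=0$ automatically, nilpotence by~\eqref{eq:nilpotence} amounts to two identities. The first is the trace identity
\[
 u_x+v_y+(uv)_z=0 .
\]
The second is the minor identity $\sigma_2=C_3-vC_1-uC_2=0$, where $(C_1,C_2,C_3)=\nabla u\times\nabla v$, as in~\eqref{eq:plane-sigma}.

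Next we compare degrees. Let $u_d,v_e$ be the highest homogeneous parts, with $d,e\geq1$. In the trace identity, $u_x+v_y$ has degree at most $\max(d,e)-1$, while $(uv)_z$ has top part $(u_dv_e)_z$ in degree $d+e-1>\max(d,e)-1$. Hence $(u_dv_e)_z=0$, so $u_d,v_e\in k[x,y]$. In the minor identity, $vC_1+uC_2$ has degree up to $d+e+\max(d,e)-2$, whereas $C_3=u_xv_y-u_yv_x$ has degree at most $d+e-2$. This forces the top parts of $v_eC_1(u_d,v_e)+u_dC_2(u_d,v_e)$ to cancel. Because $u_d,v_e$ do not depend on $z$, this top part is
\[
 v_e\bigl(u_{d,y}v_{e,z}-\dots\bigr)+\dots=0
\]
trivially, so the leading-order information is exhausted. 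We must therefore push to the next filtration level.

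The key step, and the main obstacle, is to organize this descent. We would choose a weighted grading adapted to $z$: use $\deg_z$ together with the total degree in $x,y$, writing $u=\sum_i u_i(x,y)z^i$ and $v=\sum_j v_j(x,y)z^j$ with leading $z$-degrees $m,n$. The trace identity at $z$-degree $m+n-1$ gives $(m+n)u_mv_n$ against terms of $z$-degree at most $\max(m,n)$. This forces $\min(m,n)\leq1$, and by symmetry we take $n\leq1$. Substituting $v=v_0+v_1z$ makes the identities polynomial in $z$ with explicit coefficients. The minor identity at top $z$-degree should then give either $v_1=0$ or a Wronskian-type relation $u_{m,x}v_{1,y}-u_{m,y}v_{1,x}=0$, which makes $u_m,v_1$ algebraically dependent over $k$. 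We then iterate downward in $z$ to show that $u$ and $v$ are forced to be algebraically dependent, or that $u$ is independent of $z$ after all. The first outcome contradicts rank two. In the second, the map restricted to $x,y$ is a planar map of the form treated by the planar dependence result recalled in Section~\ref{sec:preliminaries}. That result gives a linear relation, contradicting rank two via $w=uv+c$.

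If this $z$-filtration bookkeeping becomes unmanageable, the fallback is to use the identity $\det(I+sJH)=1$. Over $k[s]$, the map $\id+sH$ restricted to the fibres of $(u,v)$ is affine in the direction of the plane $\mathcal T$ from~\eqref{eq:tangent-plane}, and one would try to derive a contradiction from properness along the curve $\mathcal C$ of Proposition~\ref{prop:line-conic}. In the present bilinear case, this is the conic with directions $[1:0]$ and $[0:1]$.
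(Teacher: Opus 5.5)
Your reduction to $H=(u,v,uv+c)$ is valid, and so is the bound $\min(m,n)\le1$ when $m,n\geq1$. The total-degree comparison is correct but, as you note, yields nothing. The gap is that everything after this point is an expectation rather than an argument (``should then give'', ``we then iterate downward''). The one concrete conclusion you draw is also false: ending with $u_z=0$ does not make the map planar. The component $v$ can still be $v_0+v_1z$ with $v_1\neq0$. Then the third column of $JH$ is $v_1(0,1,u)^{\mathsf T}\neq0$, and the planar dependence result does not apply. This configuration, with one of $u,v$ free of $z$ and the other not, is the substantive case of the lemma, and your proposal does not handle it. An algebraic relation between the leading coefficients $u_m,v_1$ would not carry over to $u,v$ themselves. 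Moreover, for $m\geq2$ the coefficient of $z^{2m}$ in the minor identity is $-u_m(mu_mv_{1,x}-u_{m,x}v_1)$, coming from $uC_2$, not a Wronskian. The properness fallback is only a heuristic, not a proof.

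No descent is needed. When $\min(m,n)=1$ you have $m+n-1=\max(m,n)$, so at that $z$-degree the trace identity also picks up $u_{m,x}$ (and $v_{1,y}$ if $m=n=1$); you stopped at the inequality instead of reading off this equation.

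With $n=1$ there are two subcases:
\begin{enumerate}
\item For $m>1$ you get $u_{m,x}+(m+1)u_mv_1=0$, impossible by comparing $x$-degrees.
\item For $m=n=1$ you get $u_{1,x}+v_{1,y}+2u_1v_1=0$, impossible by total degree.
\end{enumerate}
Hence one component, say $u$, is free of $z$. Now use the minor identity. Eliminating $v_y$ via the trace identity gives $u_x^2+u_y(v_x+vv_z)=0$ in your normalization. If $u_y=0$, then $u$ is constant. If $v_z=0$, the third column vanishes and $\sigma_2=\det J_{x,y}(u,v)=0$. Either way the rank is at most one, so $u_yv_z\neq0$. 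If $\deg_zv\geq2$, the term $u_yvv_z$ has strictly the largest $z$-degree, a contradiction. If $v=Cz+D$ with $C\neq0$, the coefficient of $z$ gives $C_x+C^2=0$, again impossible by $x$-degree. This is the paper's proof.
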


\begin{proof}
Suppose first that $m=\deg_z u>0$ and $n=\deg_zv>0$. The trace equation is
\begin{equation}\label{eq:bilinear-trace}
 u_x+v_y+\gamma(uv)_z+\alpha u_z+\beta v_z=0.
\end{equation}
The degree of $(uv)_z$ forces $m+n-1\leq\max(m,n)$, so $\min(m,n)=1$. If $m>n=1$, let $A,B\in k[x,y]\setminus\{0\}$ be the leading coefficients in $z$. The coefficient of $z^m$ is
\[
 A_x+(m+1)\gamma AB=0.
\]
Comparing degrees in $x$ over $k(y)$ gives a contradiction. The case $n>m=1$ is symmetric. If $m=n=1$, we obtain $A_x+B_y+2\gamma AB=0$. The total degree of $AB$ exceeds that of either nonzero derivative, again yielding a contradiction.

After interchanging the first two coordinates if necessary, assume $u_z=0$. Using the trace equation to eliminate $v_y$ from the principal-minor identity gives
\begin{equation}\label{eq:bilinear-minors}
 u_x^2+u_y\bigl(v_x+(\gamma v+\alpha)v_z\bigr)=0.
\end{equation}
If $u_y=0$, then $u_x=0$, so $u$ is constant and the Jacobian rank is at most one. If $v_z=0$, the third column of the Jacobian is zero and its first two rows are dependent by $\sigma_2=0$; again the rank is at most one. Thus $u_yv_z\neq0$.

If $\deg_zv\geq2$, the term $vv_z$ in~\eqref{eq:bilinear-minors} has strictly larger $z$-degree than the other terms. Therefore $v=Cz+D$ with $C\neq0$. The coefficient of $z$ gives $C_x+\gamma C^2=0$, which is impossible by comparing degrees in $x$.
\end{proof}

\subsection{The remaining direction}
The conic therefore has a unique point at infinity, counted twice. Let $e$ be its direction and choose a nonzero constant form $m$ with $m(e)=0$. Use $m$ as the first row of a simultaneous coordinate change. The first component $u=m(H)$ is nonconstant. Choose a constant combination $v$ algebraically independent of $u$, complete the coordinate change, and repeat the construction of the curve.

Now $e_x=0$, and~\eqref{eq:tangent-plane} implies $e_y\neq0$. The double root of the binary quadratic in~\eqref{eq:infinity-directions} is $[0:1]$. Hence $a\neq0$ and $b=c=0$. Both coordinate derivatives of $w_v$ vanish, so $w_v=\mu\in k$. It follows that
\[
 \dd(w-\mu v)=w_u\,\dd u.
\]
We give the field-theoretic justification for the resulting algebraic dependence. The extension $L/k(u,v)$ is finite separable, and the extended coordinate derivation $\partial_v$ satisfies
\[
 \partial_v(w-\mu v)=w_v-\mu=0.
\]
By Lemma~\ref{lem:field-facts}(iii), $w-\mu v$ is therefore algebraic over $k(u)$. More explicitly, let $P(T)\in k(u,v)[T]$ be its monic minimal polynomial. Differentiating $P(w-\mu v)=0$ with respect to $v$ gives $(\partial_vP)(w-\mu v)=0$, where $\partial_vP$ denotes coefficientwise differentiation. Since $P$ is monic, $\partial_vP$ is either zero or has degree smaller than $P$. Minimality forces $\partial_vP=0$. The coefficients of $P$ thus belong to the constant field of $\partial_v$ on $k(u,v)$, which is $k(u)$. Clearing their denominators yields a nonzero polynomial relation over $k$ between $u$ and $w-\mu v$. Thus $u$ and $w-\mu v$ are algebraically dependent over $k$. The corresponding constant forms are independent. This proves Theorem~\ref{thm:dependent-pair} over an algebraically closed field.

\subsection{Descent and completion of the proof}
We use Proposition~\ref{prop:pair-normal}, which is proved in the next section over an arbitrary characteristic-zero field, assuming that a dependent pair is already defined over that field. Its proof does not use Theorem~\ref{thm:dependent-pair}.

Extend scalars from $K$ to $\Kbar$. Linear independence of the components is preserved, since it is a rank condition on their coefficient matrix. The preceding argument and Proposition~\ref{prop:pair-normal} give the canonical form over $\Kbar$. Consider the finite-dimensional relation space
\begin{equation}\label{eq:relation-space}
 V_K=\{R\in K[U,V,W]_{\leq2}:R(H_1,H_2,H_3)=0\}.
\end{equation}
It is the kernel of a linear map of finite-dimensional coefficient spaces over $K$, and therefore
\[
 V_K\otimes_K\Kbar=V_{\Kbar}.
\]
For $\Ncal_P$, the first two components are algebraically independent and the third is the square of the first. Its full ideal of relations is $(W-U^2)$, whose subspace of relations of degree at most two is one-dimensional. Constant linear target changes preserve total degree, and source changes preserve relations. Consequently $\dim_KV_K=1$.

Choose $0\neq R\in V_K$. Its constant term is zero because $H(0)=0$. Over $\Kbar$, its quadratic part has rank one. The rank is also one over $K$, so
\[
 R=\kappa\ell^2+\ell',\qquad \kappa\in K^*,\quad \ell,\ell'\in(K^3)^*.
\]
Here a symmetric rank-one matrix over $K$ can be written as a nonzero scalar times an outer square over $K$ by choosing a nonzero diagonal entry. This does not require a square root. The forms $\ell,\ell'$ are independent. Otherwise $\ell'=c\ell$ and
\[
 \ell(H)\bigl(\kappa\ell(H)+c\bigr)=0.
\]
The polynomial ring is a domain, so $\ell(H)$ is constant. Its value at the origin is zero, contradicting independence of the components. Finally,
\[
 \ell'(H)=-\kappa\ell(H)^2
\]
is the required algebraic dependence over $K$. This completes the proof of Theorem~\ref{thm:dependent-pair}.\qed

\section{Classification and tameness}\label{sec:classification}

We now pass from the dependent pair in Theorem~\ref{thm:dependent-pair} to a canonical form. The classification rests on a polynomial calculation over the original field.

\begin{theorem}[Classification]\label{thm:classification}
Let $H\in K[x,y,z]^3$ satisfy $H(0)=0$, and suppose that its components are linearly independent over $K$. Then $JH$ is nilpotent if and only if there exist $T\in\GL_3(K)$ and a nonconstant polynomial $P\in K[t]$, with $P(0)=0$, such that
\begin{equation}\label{eq:canonical}
 T\circ H\circ T^{-1}=\Ncal_P,
 \qquad
 \Ncal_P=\bigl(P(y+x^2),\ z-2xP(y+x^2),\ P(y+x^2)^2\bigr).
\end{equation}
\end{theorem}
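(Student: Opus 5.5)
The \emph{if} direction is a direct computation. Put $r=y+x^2$, so $\nabla r=(2x,1,0)$, and write $P=P(r)$, $P'=P'(r)$. The Jacobian of $\Ncal_P$ has rows
\[
(2xP',\,P',\,0),\qquad(-2P-4x^2P',\,-2xP',\,1),\qquad(4xPP',\,2PP',\,0).
\]
Its trace is $2xP'-2xP'=0$. Its determinant vanishes because the third row is $2P$ times the first. For $\sigma_2$, the $(1,2)$ minor is $-4x^2P'^2+P'(2P+4x^2P')=2PP'$, the $(1,3)$ minor is $0$, and the $(2,3)$ minor is $-2PP'$; so $\sigma_2=0$. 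By~\eqref{eq:nilpotence} the Jacobian is nilpotent. The components are independent because $P$ and $P^2$ are independent and $z$ occurs only in the second component. By~\eqref{eq:constant-conjugacy}, nilpotence, normalization and independence pass to $H=T^{-1}\circ\Ncal_P\circ T$.

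For the \emph{only if} direction, I would first apply Theorem~\ref{thm:dependent-pair} to obtain independent forms $\ell_1,\ell_2$ over $K$ with $\ell_1(H),\ell_2(H)$ algebraically dependent. After a constant conjugation we may take these to be the first and third components. Then Proposition~\ref{prop:pair-normal} converts this dependent pair into the normal form~\eqref{eq:canonical}. I would structure that conversion as follows.

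\textbf{Step 1.} Write the pair as $f(r),g(r)$ with $r$ a closed generator (Section~\ref{sec:preliminaries}), normalized by $r(0)=0$.

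\textbf{Step 2.} Use the nilpotence identities~\eqref{eq:nilpotence} to show that the Jacobian derivation $D=\det J(r,\,\cdot\,,\,\cdot\,)$, or a suitable rational multiple of it, has the remaining component as a slice. This should force that component to be affine in a direction transverse to the level sets of $r$.

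\textbf{Step 3.} Exclude nonlinear dependence of the second component on a further invariant by a unit argument: a polynomial that is invertible in the relevant ring must be a constant.

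\textbf{Step 4.} Construct two commuting affine locally nilpotent derivations that annihilate $r$. Their common kernel, together with~\eqref{eq:intersection}, should give $r=R(y+Q(x,z))$ with $\deg Q\le2$.

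\textbf{Step 5.} Substitute back into $\sigma_2=0$. This should force the quadratic part of $Q$ to have rank one, so that a linear change over $K$, chosen by picking a nonzero diagonal entry as in the descent step, brings it to $x^2$.

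\textbf{Step 6.} Match the components: the first is $P(y+x^2)$, the third is a quadratic relation giving $P^2$ after scaling, and the second is $z-2xP$ after absorbing terms via triangular linear changes.

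The main obstacle is Steps 2--4, which show that $r$ depends only on one quadratic coordinate. At Step 4 I would first try to exhibit the derivations explicitly as $\partial_z$-type and $\partial_x-2x\partial_y$-type operators built from the cofactors $C_i$. I would then check their commutation and local nilpotence, using that their coefficients are constant or affine. A further difficulty is keeping every coordinate change constant and defined over $K$, since only constant conjugations are allowed by~\eqref{eq:constant-conjugacy}.
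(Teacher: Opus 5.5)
Your argument is correct and follows the paper's proof: the converse is the same trace, determinant and $\sigma_2$ computation for $J\Ncal_P$, and the forward direction is exactly Theorem~\ref{thm:dependent-pair} followed by Proposition~\ref{prop:pair-normal}, whose proof your Steps 1--6 follow in outline. One correction to that outline: in the paper the nilpotence identities produce a rational derivation $\Delta$ with slice $x$ that kills both $r$ and the remaining component $v$, and this is what places $y,z$ in $(\ker\Delta)[x]$ for the unit calculation; a derivation killing $r$ with $v$ as its slice, as in your Step 2, exists for any algebraically independent $r,v$ and by itself uses nothing about nilpotence.
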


The theorem describes a family of normal forms; it does not assert that $P$ is unique under linear conjugation. We prove the algebraic part separately for use in the descent in Section~\ref{sec:dependent-pair}.

\subsection{From a dependent pair to a quadratic coordinate}
\begin{proposition}\label{prop:pair-normal}
Let $H\in K[x,y,z]^3$ be normalized, with nilpotent Jacobian and linearly independent components. If two independent constant linear combinations of its components are algebraically dependent over $K$, then $H$ is linearly conjugate over $K$ to $\Ncal_P$ for some $P\in K[t]\setminus K$ with $P(0)=0$.
\end{proposition}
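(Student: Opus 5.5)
We follow the outline given in the introduction. First conjugate by a constant $T_0\in\GL_3(K)$ so that the dependent pair becomes the first and third components. Since the pair is algebraically dependent and neither member is constant (each vanishes at the origin and is nonzero by independence), the family $\{H_1,H_3\}$ generates a field of transcendence degree one over $K$. The closed-generator theorem recalled in Section~\ref{sec:preliminaries} then gives a closed $r\in K[x,y,z]$ with $H_1=f(r)$ and $H_3=h(r)$, where $f,h\in K[t]$. We may normalize $r(0)=0$. The Jacobian matrix then has rows $f'(r)\nabla r$, $\nabla g$ and $h'(r)\nabla r$, where $g=H_2$. The rank is two, since rank one would force a linear relation by the result quoted in Section~\ref{sec:preliminaries}. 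Hence $r$ and $g$ are algebraically independent.

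\textbf{Controlling $g$.} Let $D=\det J(r,g,\cdot)$ be the Jacobian derivation, and write the three invariants \eqref{eq:nilpotence} in terms of $\nabla r$, $\nabla g$ and $f',h'$. The determinant vanishes automatically. The trace condition reads $f'(r)r_x+g_y+h'(r)r_z=0$, and $\sigma_2$ gives one more identity. Over the field $K(r,g)$ we pass to the derivation $\delta=\partial_g$, extended to the relative closure as in Lemma~\ref{lem:field-facts}(ii). This $\delta$ has a slice, namely $g$. I would use these identities to show that $\partial_g$ of the relevant coefficient expressions vanishes. The aim is to conclude that $g$ enters only affinely, so that $g=z'$ for a suitable coordinate up to a function of $r$. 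A nonlinear dependence on a second invariant would produce a nonconstant unit in the polynomial ring, which is impossible. This is the ``unit calculation'' step.

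\textbf{Locating $r$.} Next I would consider the two derivations $\Delta_1=\partial_y-f'(r)\partial_z$ and $\Delta_2$, built from the kernel directions of the rows. After the previous step, nilpotence should show that both are affine and commuting locally nilpotent derivations that kill $r$. Their common kernel in $K[x,y,z]$ is $K[y+Q(x,z)]$ with $\deg Q\le2$. By closedness, $r=\tilde P(y+Q)$, and then $\tilde P$ is linear because $r$ is closed. Substituting this back into the $\sigma_2$ identity forces the quadratic part of $Q$ to have rank one; over $K$ it is $\lambda\,m(x,z)^2$ with $\lambda\in K^*$. A linear change of variables, in which $\lambda$ is absorbed by a scaling of $y$, so that no square root is needed, brings $r$ to $y+x^2$ up to lower-order linear terms. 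These terms are then removed by a translation-free linear shear.

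\textbf{Final normalization and main obstacle.} With $r=y+x^2$, the trace identity determines $g=z-2xP(r)$ up to terms that are killed by constant shears. The $\sigma_2$ identity forces $h=f^2$ after rescaling the target, so that $P=f$ up to a constant multiple. The constants are fixed by conjugating with a diagonal matrix over $K$. All the matrices used are constant and defined over $K$, so the result is a linear conjugacy over $K$. I expect the main obstacle to be the derivation step that makes $g$ affine over $K[r]$ and produces the commuting locally nilpotent derivations. Proving they are affine, that is, that $f'(r)$ is forced to combine into linear coefficients, is delicate. I would first try to exploit the degree in $z$ of $g$, together with the trace identity, to bound $\deg_z g\le1$ directly.
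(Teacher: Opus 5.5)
Your outline follows the skeleton announced in the introduction, but the two steps that carry the proof are left as intentions. The tools you name for them would not work.

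\emph{Making $g$ affine.} The derivation $\partial_g$ on the relative closure $L$ of $K(r,g)$ acts only on $L$. It kills $r$ and has slice $g$, but it says nothing about how the source coordinates move along the generic fibre of $(r,g)$, and that is what has to be exploited. The paper uses the derivation of $K(x,y,z)$ along that fibre.

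With $a=h'/f'$ (nonconstant, since $f,h$ are independent), the nilpotence equations are $W+g_y=0$ and $Wg_y=r_y\bigl(f'(r)g_x+h'(r)g_z\bigr)$, where $W=f'(r)r_x+h'(r)r_z$. One first proves $r_y\neq0$: otherwise $W=0$, and reducing modulo $r-\lambda$ for two values of $\lambda$ gives $r_x=r_z=0$. Then
\[
 \Delta=\partial_x+a(r)\partial_z+\frac{g_y}{f'(r)r_y}\partial_y
\]
satisfies $\Delta x=1$ and $\Delta r=\Delta g=0$. In the coordinates $x,r,s$ with $s=z-a(r)x$ one has $\partial_y=r_y(\partial_r-a'(r)x\partial_s)$. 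Hence $y$ is quadratic in $x$ over $M=\ker\Delta$, and $K[x,y,z]\subseteq M[x]$ with $x$ transcendental over $M$.

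The unit calculation is the identity $1=r_y\cdot(\partial_r-a'x\partial_s)y$ in the polynomial ring $M[x]$. The $x^3$-coefficient $\frac{(a')^2}{2f'}g_{ss}$ of the second factor must vanish. Hence $g=\alpha(r)z+\beta(r)x+\gamma(r)$, and the coefficients are polynomials after another reduction modulo $r-\lambda$.

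Your fallback of bounding $\deg_z g\leq1$ is false in the coordinates available at that stage. Conjugating $\Ncal_P$ by $(x,y,z)\mapsto(x+cz,y,z)$ gives $\bigl(P(\rho')+cP(\rho')^2,\ z-2(x-cz)P(\rho'),\ P(\rho')^2\bigr)$ with $\rho'=y+(x-cz)^2$. The dependent pair sits in positions one and three, yet the middle component has $z$-degree $1+2\deg P$. What actually holds is affineness in $(x,z)$ over $K[r]$, not small $z$-degree.

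\emph{The commuting derivations.} Your $\Delta_1=\partial_y-f'(r)\partial_z$ does not annihilate $r$ even for $\Ncal_P$ itself: there $r=y+x^2$ and $\Delta_1r=1$. Its coefficient $f'(r)$ is also not affine in general.

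The affine derivations come from specializing the trace identity once $g$ is known to be affine. The identity $f'(r)r_x+h'(r)r_z+\bigl(\alpha'(r)z+\beta'(r)x+\gamma'(r)\bigr)r_y=0$, reduced modulo $r-\lambda$, gives $E_\lambda r=\mu_\lambda(r-\lambda)$ for the affine derivation $E_\lambda=f'(\lambda)\partial_x+h'(\lambda)\partial_z+\bigl(\alpha'(\lambda)z+\beta'(\lambda)x+\gamma'(\lambda)\bigr)\partial_y$. Weights $(1,2,1)$ make $E_\lambda$ locally nilpotent, which forces $\mu_\lambda=0$. Two values of $\lambda$ with independent $(f'(\lambda),h'(\lambda))$ give $D_1,D_2$. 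They commute because their commutator is a constant multiple of $\partial_y$ annihilating $r$, and $r_y\neq0$.

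Without these two mechanisms, the later steps have nothing to rest on. These are the rank-one conclusion for $Q_2$, which comes from the relation $Q_2(f,h)=\Lambda(f,h)$ extracted from $\sigma_2$, and the final normalization. Your remark that closedness forces $\tilde P$ to be linear is correct, and it is a clean alternative to absorbing $\tilde P$ into $f,h$.
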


\begin{proof}
A constant conjugation and the closed-generator theorem give
\begin{equation}\label{eq:pair-form}
 H=(f(r),v,g(r)),\qquad r(0)=f(0)=g(0)=0,
\end{equation}
where $r\in K[x,y,z]$ is closed and $f,g\in K[t]$ are linearly independent. Their derivatives are likewise linearly independent, so
\begin{equation}\label{eq:a-ratio}
 a(t)=\frac{g'(t)}{f'(t)}\in K(t)\setminus K,\qquad a'(t)\neq0.
\end{equation}
Primes in this proof denote univariate derivatives.

Put $W=f'(r)r_x+g'(r)r_z$. The determinant vanishes because the first and third Jacobian rows are proportional. The other nilpotence equations are
\begin{equation}\label{eq:pair-nilpotence}
 W+v_y=0,\qquad Wv_y-r_y\bigl(f'(r)v_x+g'(r)v_z\bigr)=0.
\end{equation}
We claim that $r_y\neq0$. If $r_y=0$, the two equations give $W=v_y=0$. Reducing $W=0$ modulo $r-\lambda$ gives
\[
 r-\lambda\mid f'(\lambda)r_x+g'(\lambda)r_z.
\]
The polynomial on the right has degree less than $\deg r$, and hence is zero. Two choices of $\lambda\in K$ with independent vectors $(f'(\lambda),g'(\lambda))$ force $r_x=r_z=0$, a contradiction.

\smallskip\noindent\emph{A derivation with a slice.}
On $\mathcal K=K(x,y,z)$ define
\begin{equation}\label{eq:delta}
 \Delta=\partial_x+a(r)\partial_z+\frac{v_y}{f'(r)r_y}\partial_y.
\end{equation}
Equations~\eqref{eq:pair-nilpotence} give
\[
 \Delta x=1,\qquad \Delta r=\Delta v=0,\qquad \Delta z=a(r).
\]
Set $s=z-a(r)x$. Then $\Delta s=0$, and $\det J(x,r,s)=r_y\neq0$. Thus $\mathcal K/K(x,r,s)$ is finite separable. Extend $\partial_r,\partial_s$ to $\mathcal K$. They commute with $\Delta$, the extension of $\partial_x$ in these coordinates. If $M=\ker_{\mathcal K}\Delta$, then both preserve $M$.

Evaluating on $x,r,s$ and using uniqueness of extension gives
\begin{equation}\label{eq:dy-identity}
 \partial_y=r_y\bigl(\partial_r-a'(r)x\partial_s\bigr).
\end{equation}
Since $v\in M$,
\[
 \Delta y=\frac{v_r-a'xv_s}{f'},
\]
where $a',f'$ are evaluated at $r$. Subtracting a polynomial in $x$ and applying $\Delta$ gives
\begin{equation}\label{eq:y-polynomial}
 y=c_0+\frac{v_r}{f'}x-\frac{a'v_s}{2f'}x^2,\qquad c_0\in M.
\end{equation}
Together with $z=s+a(r)x$, this proves $K[x,y,z]\subseteq M[x]$. The variable $x$ is transcendental over $M$: differentiating a minimal polynomial would otherwise contradict $\Delta x=1$ and separability.

\smallskip\noindent\emph{A unit calculation.}
Apply~\eqref{eq:dy-identity} to $y$:
\begin{equation}\label{eq:unit-identity}
 1=r_y\bigl(\partial_r-a'x\partial_s\bigr)y.
\end{equation}
Both factors are in $M[x]$. For the second factor, use~\eqref{eq:y-polynomial} and the fact that the extended derivations preserve $M$. Both factors are therefore units of $M[x]$. The coefficient of $x^3$ in the second factor is
\[
 \frac{(a')^2}{2f'}v_{ss},
\]
and therefore $v_{ss}=0$. The elements $v_s$ and $v-sv_s$ are killed by $\Delta$ and $\partial_s$. Lemma~\ref{lem:field-facts} shows that they are algebraic over $K(r)$. Since $r$ is closed, they lie in $K(r)$. Consequently
\begin{equation}\label{eq:v-rational}
 v=\alpha(r)z+\beta(r)x+\gamma(r),\qquad \alpha,\beta,\gamma\in K(t).
\end{equation}
More precisely, $\beta=-a\alpha$, although this relation is not needed in the remaining reduction.

We next show that the coefficients in~\eqref{eq:v-rational} are polynomials. Write
\[
 d(r)v=\alpha_0(r)z+\beta_0(r)x+\gamma_0(r)
\]
with $d,\alpha_0,\beta_0,\gamma_0\in K[t]$ have greatest common divisor one. If $d$ is nonconstant, choose a root $\lambda$ over $\overline K$. Reduction modulo $r-\lambda$ shows that $r-\lambda$ divides
$\alpha_0(\lambda)z+\beta_0(\lambda)x+\gamma_0(\lambda)$. The divisor has positive $y$-degree, whereas the dividend is independent of $y$. The dividend is therefore zero. All four univariate polynomials then share the root $\lambda$, contradicting their B\'ezout identity. Thus
\begin{equation}\label{eq:v-polynomial}
 \alpha,\beta,\gamma\in K[t].
\end{equation}

\smallskip\noindent\emph{Commuting affine derivations.}
Substitute~\eqref{eq:v-rational} into the trace equation. We obtain
\begin{equation}\label{eq:affine-trace}
 f'(r)r_x+g'(r)r_z+
 \bigl(\alpha'(r)z+\beta'(r)x+\gamma'(r)\bigr)r_y=0.
\end{equation}
For $\lambda\in K$, let
\[
 E_\lambda=f'(\lambda)\partial_x+g'(\lambda)\partial_z+
 \bigl(\alpha'(\lambda)z+\beta'(\lambda)x+\gamma'(\lambda)\bigr)\partial_y.
\]
Reduction of~\eqref{eq:affine-trace} modulo $r-\lambda$ gives $r-\lambda\mid E_\lambda r$. Since $\deg E_\lambda r\leq\deg r$, we have $E_\lambda r=\mu_\lambda(r-\lambda)$ for some $\mu_\lambda\in K$. Assign weights $1,2,1$ to $x,y,z$. The derivation $E_\lambda$ strictly lowers weighted degree, so it is locally nilpotent. This forces $\mu_\lambda=0$, so $E_\lambda r=0$ for every $\lambda\in K$.

Choose two values of $\lambda$ for which $(f'(\lambda),g'(\lambda))$ are independent. Constant linear combinations of the corresponding derivations give
\[
 D_1=\partial_x+b_1(x,z)\partial_y,\qquad
 D_2=\partial_z+b_2(x,z)\partial_y,
\]
with $b_1,b_2$ affine and $D_1r=D_2r=0$. Their commutator is
\[
 [D_1,D_2]=(\partial_xb_2-\partial_zb_1)\partial_y.
\]
Its coefficient is constant. Since the commutator annihilates $r$ and $r_y\neq0$, its coefficient is zero. There is therefore a polynomial $Q\in K[x,z]$ with
\[
 \deg Q\leq2,\quad Q(0)=0,\qquad Q_x=-b_1,\quad Q_z=-b_2.
\]
In the auxiliary polynomial coordinates $(x,\rho,z)$, where $\rho=y+Q(x,z)$, the derivations become $\partial_x,\partial_z$. Their common kernel in the polynomial ring is $K[\rho]$, so $r=R(\rho)$ for a polynomial $R$ with $R(0)=0$. Absorb $R$ into the univariate polynomials in~\eqref{eq:pair-form} and~\eqref{eq:v-rational}.

Write
\[
 Q=\tfrac12Ax^2+Bxz+\tfrac12Cz^2+Dx+Ez.
\]
Expressing the trace equation in the coordinates $x,\rho,z$ and comparing coefficients gives
\[
 \beta'=-Af'-Bg',\qquad \alpha'=-Bf'-Cg',\qquad \gamma'=-Df'-Eg'.
\]
Integrating and using $H(0)=0$, we obtain
\begin{equation}\label{eq:quadratic-form}
 H=\bigl(f(\rho),-Q_xf(\rho)-Q_zg(\rho)+\Lambda(x,z),g(\rho)\bigr),
 \qquad \rho=y+Q(x,z),
\end{equation}
where $\Lambda$ is a homogeneous linear polynomial and $f(0)=g(0)=0$. If $Q_2$ is the quadratic homogeneous part of $Q$, expansion of the principal minors gives
\begin{align}\label{eq:quadratic-relation-derivative}
 \sigma_2(JH)
 &=f'(\rho)\bigl(Q_{xx}f(\rho)+Q_{xz}g(\rho)-\Lambda_x\bigr)\notag\\
 &\quad+g'(\rho)\bigl(Q_{xz}f(\rho)+Q_{zz}g(\rho)-\Lambda_z\bigr)\\
 &=\left.\frac{\dd}{\dd t}
 \bigl(Q_2(f(t),g(t))-\Lambda(f(t),g(t))\bigr)\right|_{t=\rho}.\notag
\end{align}
Substitution $t\mapsto \rho$ is injective. The polynomial being differentiated is constant and vanishes at zero. Thus
\begin{equation}\label{eq:quadratic-relation}
 Q_2(f(t),g(t))=\Lambda(f(t),g(t)).
\end{equation}

\smallskip\noindent\emph{Constant linear normalization.}
The shear $(x,y,z)\mapsto(x,y+Dx+Ez,z)$ removes the linear part of $Q$ from~\eqref{eq:quadratic-form}; the corresponding target change cancels the terms $-Df-Eg$. We may therefore assume $Q=Q_2$. If $Q_2=0$, relation~\eqref{eq:quadratic-relation} and independence of $f,g$ force $\Lambda=0$. The second component would then vanish, contradicting independence.

The binary quadratic form $Q_2$ cannot have rank two. Over $\overline K$, such a form is a product of independent linear forms. After evaluation at $(f,g)$, relation~\eqref{eq:quadratic-relation} would give
\[
  \bar{F}G=\alpha \bar{F}+\beta G,\qquad \bar{F}(0)=G(0)=0,
\]
with $\bar{F},G$ linearly independent over $ \overline K$. But
$(\bar{F}-\beta)(G-\alpha)=\alpha\beta$. If $\alpha\beta\neq0$, both factors are units of $\overline K[t]$. If $\alpha\beta=0$, one factor is zero. In either case, $\bar{F}$ or $G$ is constant, contradicting independence.

Thus $Q_2$ has rank one and can be written over $K$ as $\kappa L(x,z)^2$, with $\kappa\in K^*$ and $L$ a nonzero linear form. After a simultaneous linear change of $x,z$, write
\[
 Q_2=\kappa x^2,\qquad \Lambda=\alpha x+\beta z.
\]
Then $\kappa f^2=\alpha f+\beta g$. Necessarily $\beta\neq0$, and~\eqref{eq:quadratic-form} becomes
\begin{equation}\label{eq:precanonical}
 H=\left(f(\rho),\ \beta z-2\kappa xf(\rho)+\alpha x,
 \ \frac{\kappa}{\beta}f(\rho)^2-\frac{\alpha}{\beta}f(\rho)\right),
 \qquad \rho=y+\kappa x^2.
\end{equation}
The shear $z'=z+(\alpha/\beta)x$ removes the terms involving $\alpha$. Finally set
\[
 x'=x,\qquad y'=y/\kappa,\qquad z''=(\beta/\kappa)z'.
\]
With $P(t)=f(\kappa t)$, the transformed map is $\Ncal_P(x',y',z'')$. All these changes are defined over $K$, and $P(0)=0$.
\end{proof}

\begin{proof}[Proof of Theorem~\ref{thm:classification}]
The forward implication follows from Theorem~\ref{thm:dependent-pair} and Proposition~\ref{prop:pair-normal}. For the converse, put $\rho=y+x^2$, $u=P(\rho)$, and $\bar{q}=P'(\rho)$. Then
\begin{equation}\label{eq:canonical-J}
 J\Ncal_P=
 \begin{pmatrix}
 2x\bar{q}&\bar{q}&0\\
 -2u-4x^2\bar{q}&-2x\bar{q}&1\\
 4xu\bar{q}&2u\bar{q}&0
 \end{pmatrix}.
\end{equation}
The trace and determinant vanish. The principal minors of order two are $2u\bar{q},0,-2u\bar{q}$, so nilpotence follows from~\eqref{eq:nilpotence}. The first and third components are independent because $u$ is nonconstant, and the second component has coefficient one in $z$. Hence all three are independent.
\end{proof}

\begin{remark}
Theorem~\ref{thm:classification} provides an affirmative answer to Problem~3.8 in~\cite{YanTang} and Problem~4.1 in \cite{HeYan}.
\end{remark}

\subsection{The dependent case over a polynomial coefficient ring}
To prove tameness, we also need a normal form for maps with dependent components. The coefficient ring is essential to the following elementary argument.

\begin{lemma}\label{lem:planar-form}
Let $R=K[z]$ and let $h=(h_1,h_2)\in R[x,y]^2$ have nilpotent planar Jacobian $J_{x,y}h$. There are $a_1,a_2,c_1,c_2\in R$ and $f\in R[t]$ such that
\begin{equation}\label{eq:dependent-form}
 h_1=a_2f(a_1x+a_2y)+c_1,\qquad
 h_2=-a_1f(a_1x+a_2y)+c_2,
\end{equation}
where $(a_1,a_2)=R$ and $f(0)=0$.
\end{lemma}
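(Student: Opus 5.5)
We work first over the field $k=K(z)$ and descend to $R=K[z]$ at the end, using that $R$ is a principal ideal domain and $R[x,y]$ is a UFD.

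\emph{Normalization and linear dependence.} Put $c_i=h_i(0,0)\in R$ and $g=h-(c_1,c_2)$. Then $g(0)=0$ and $J_{x,y}g=J_{x,y}h$ is nilpotent.

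Regard $g$ as a normalized planar map over the characteristic-zero field $k$. By the planar dependence result recalled in Section~\ref{sec:preliminaries}, there is a nonzero $(\lambda_1,\lambda_2)\in k^2$ with $\lambda_1g_1+\lambda_2g_2=0$. (Equivalently, the rank is at most one because $\det J_{x,y}g=0$, and the rank-one result applies.) Clearing denominators and dividing by a greatest common divisor, we may take $\lambda_1,\lambda_2\in R$ with $(\lambda_1,\lambda_2)=R$. If one of them vanishes, the other is a unit and the corresponding component of $g$ is zero; this case is handled by the same argument below.

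\emph{Factoring out a unimodular vector.} In the UFD $R[x,y]$ we have $\lambda_1g_1=-\lambda_2g_2$ with $\gcd(\lambda_1,\lambda_2)=1$. Hence $\lambda_2\mid g_1$, and we get
\[
 g_1=-\lambda_2\varphi,\qquad g_2=\lambda_1\varphi,\qquad \varphi\in R[x,y],\quad \varphi(0)=0.
\]
Set $a_2=-\lambda_2$ and $a_1=-\lambda_1$, so that $g=\varphi\cdot(a_2,-a_1)$ and $(a_1,a_2)=R$. The determinant condition is now automatic. The trace condition reads
\[
 a_2\varphi_x-a_1\varphi_y=0.
\]

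\emph{Reduction to a kernel computation.} Since $(a_1,a_2)=R$, choose $b_1,b_2\in R$ with $a_1b_2-a_2b_1=1$. The substitution
\[
 u=a_1x+a_2y,\qquad w=b_1x+b_2y
\]
is an $R$-automorphism of $R[x,y]$, with inverse $x=b_2u-a_2w$ and $y=-b_1u+a_1w$. In these coordinates,
\[
 \partial_w=-a_2\partial_x+a_1\partial_y .
\]
The trace condition therefore says $\partial_w\varphi=0$ in $R[u,w]$. Since $R$ has characteristic zero, the kernel of $\partial_w$ on $R[u,w]$ is $R[u]$. Hence $\varphi=f(u)=f(a_1x+a_2y)$ with $f\in R[t]$, and $f(0)=0$ because $\varphi(0)=0$. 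Substituting back yields~\eqref{eq:dependent-form}.

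\emph{Main obstacle.} The only substantive input is the linear dependence over $K(z)$, which we take from the cited planar result; the descent to $R$ then follows from the primitivity and UFD arguments above. The one point requiring care is to cite that result over the non-closed field $K(z)$. If it is only available over algebraically closed fields, we would instead descend the kernel of the coefficient matrix. That kernel is defined over $k$, so a nonzero relation found over $\overline{k}$ yields a nonzero relation over $k$.
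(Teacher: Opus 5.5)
Your proposal is correct and follows essentially the same route as the paper: subtract $h(0,0)$, get a primitive relation over $K[z]$ from planar dependence over $K(z)$, write $\bar h=(a_2q,-a_1q)$, complete $(a_1,a_2)$ to a matrix in $\SL_2(K[z])$, and read the trace equation as the vanishing of a coordinate partial derivative whose kernel is $K[z][a_1x+a_2y]$. The only differences are cosmetic: you extract $q$ by divisibility in the UFD rather than by the explicit B\'ezout formula $q=v\bar h_1-u\bar h_2$, and $\varphi(0)=0$ (hence $f(0)=0$) follows from primitivity of $(a_1,a_2)$, which you use without stating it.
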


\begin{proof}
Set $c_i=h_i(0,0)$ and $\bar h=h-c$. Over $K(z)$, the normalized planar map $\bar h$ has nilpotent Jacobian and therefore dependent components. Clearing the denominators in a relation and dividing by the greatest common divisor gives
\[
 a_1\bar h_1+a_2\bar h_2=0,\qquad a_1,a_2\in R,\quad(a_1,a_2)=R.
\]
If $\bar h=0$, take $(a_1,a_2)=(1,0)$ and $f=0$. Otherwise, the primitive relation gives
$\bar h=(a_2q,-a_1q)$ for $q\in R[x,y]$. For example, if $u a_1+v a_2=1$, then $q=v\bar h_1-u\bar h_2$.

Choose $\alpha,\beta\in R$ with $\alpha a_1-\beta a_2=1$, and put
\begin{equation}\label{eq:unimodular-M}
 M(z)=\begin{pmatrix}a_1&a_2\\ \beta&\alpha\end{pmatrix}\in\SL_2(R),
 \qquad (s,t)=(a_1x+a_2y,\beta x+\alpha y).
\end{equation}
The trace equation is $a_2q_x-a_1q_y=0$. In the coordinates $s,t$, the derivation $a_2\partial_x-a_1\partial_y$ is $-\partial_t$. Hence $q=f(s)$ for $f\in R[s]$. Since $\bar h(0,0)=0$ and the pair is primitive, $f(0)=0$.
\end{proof}

\begin{remark}\label{rem:coefficient-ring}
Lemma~\ref{lem:planar-form} is Theorem 7.2.25 in \cite{EssenBook}. We give the proof here for completeness. In Lemma~\ref{lem:planar-form}, $f$ is a polynomial in one variable with coefficients in $K[z]$. The proof does not justify replacing $K[z][t]$ by $K[t]$. This distinction also applies to the dependent normal forms in Section~\ref{sec:blocks}.
\end{remark}

\subsection{Tame factorizations and polynomial inverses}
\begin{theorem}[Tameness]\label{thm1}
Let $H\in K[x,y,z]^3$ have nilpotent Jacobian. Then $F=\id+H$ is tame over $K$. More generally, for an indeterminate $s$,
\[
 \id+sH\in\TA_3(K[s]).
\]
\end{theorem}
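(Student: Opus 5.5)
First I reduce to a statement over $K[s]$. Specializing $s\mapsto1$ sends $\TA_3(K[s])$ into $\TA_3(K)$, so it suffices to show $\id+sH\in\TA_3(K[s])$. Constant conjugation by $T\in\GL_3(K)$ preserves tameness. A constant term $H(0)$ only contributes the translation by $sH(0)$, which is elementary over $K[s]$. So I may assume $H(0)=0$, and I split into two cases according to whether the components are linearly independent.

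\emph{Independent case.} By Theorem~\ref{thm:classification}, it suffices to factor $\id+s\Ncal_P$. I write $\rho=y+x^2$. The plan is to realize this map as a composition of elementary maps over $K[s]$, guided by the invariant~\eqref{eq:intro-invariant}. First apply $\varphi_1=(x,\ y,\ z+sP(\rho)^2)$, which is elementary in $z$. Next comes a map that replaces $z$ by $z-2xsP(\rho)$; I will track how the $z$-row interacts with the later steps. Then $(x,y,z)\mapsto(x+sP(\rho),y,z)$ is not elementary, since $\rho$ involves $x$. Instead I will pass to the auxiliary tame coordinate $(x,\rho,z)=\psi(x,y,z)$, where $\psi=(x,y+x^2,z)$ is elementary. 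The target identity is $Y+X^2-sZ=\rho$, so I expect
\[
 \psi\circ(\id+s\Ncal_P)
 =\bigl(x+sP(\rho),\ \rho+sZ,\ Z\bigr),\qquad Z=z-2xsP(\rho)+sP(\rho)^2 .
\]
In the coordinates $(x,\rho,z)$ this is built as follows: apply $z\mapsto z-2xsP(\rho)+sP(\rho)^2$ (elementary, since it depends on $x,\rho$), then $\rho\mapsto\rho+sz$ (elementary), then $x\mapsto x+sP(\rho-sz)$ (elementary), and finally conjugate back by $\psi$. I will verify these compositions directly, including the order of substitution. This gives the factorization over $K[s]$ and, as a byproduct, the explicit inverse.

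\emph{Dependent case.} A constant conjugation gives $\ell(H)=0$ with $\ell$ the third coordinate form, so $H=(h_1,h_2,0)$. The matrix $JH$ is then block triangular with planar block $J_{x,y}(h_1,h_2)$ over $R=K[z]$, which is therefore nilpotent. Lemma~\ref{lem:planar-form} gives the form~\eqref{eq:dependent-form}. Using $M(z)\in\SL_2(K[z])$ from~\eqref{eq:unimodular-M}, consider the map $\mu=(a_1x+a_2y,\ \beta x+\alpha y,\ z)$. I claim $\mu\in\TA_3(K)$, and this is the main obstacle. It needs $\SL_2(K[z])=E_2(K[z])$, which holds because $K[z]$ is Euclidean; each elementary matrix with entries in $K[z]$ gives an elementary automorphism in $\EA_3$. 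In the coordinates $(\sigma,t,z)=\mu$, we have $\sigma\circ(\id+sH)=\sigma+s(a_1h_1+a_2h_2)=\sigma+s(a_1c_1+a_2c_2)$. Likewise $t$ maps to $t+s\bigl(f(\sigma)+\beta c_1+\alpha c_2\bigr)$, since $\alpha a_1-\beta a_2=1$. Hence $\mu\circ(\id+sH)\circ\mu^{-1}$ is the triangular map $\bigl(\sigma+s\,c(z),\ t+s\,f(\sigma)+s\,c'(z),\ z\bigr)$. This is a composition of elementary maps over $K[s]$, which proves tameness in this case as well.
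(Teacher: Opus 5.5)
Your overall route matches the paper's proof. You remove the constant term by a translation. In the independent case you use Theorem~\ref{thm:classification} and conjugate by $\psi=(x,y+x^2,z)$. In the dependent case you use Lemma~\ref{lem:planar-form} with $M(z)\in\SL_2(K[z])=E_2(K[z])$ to reach a triangular map.

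However, the explicit computation in the independent case is wrong as written, and the verification you promise would fail. The third component of $\id+s\Ncal_P$ is $z+sP(\rho)^2$. The term $-2xsP(\rho)$ belongs to the second row $y+s(z-2xP(\rho))$, and it is exactly what cancels against $X^2=x^2+2sxP(\rho)+s^2P(\rho)^2$ when you form $Y+X^2$. With your $Z=z-2xsP(\rho)+sP(\rho)^2$, the map $\psi^{-1}\circ(\text{your three elementary steps})\circ\psi$ has third component $z+s\bigl(P(\rho)^2-2xP(\rho)\bigr)$, so it is not $\id+s\Ncal_P$.

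The repair is immediate: the first step should be $z\mapsto z+sP(\rho)^2$. Then
\[
\psi\circ(\id+s\Ncal_P)\circ\psi^{-1}=\bigl(x+sP(\rho),\ \rho+sz+s^2P(\rho)^2,\ z+sP(\rho)^2\bigr),
\]
and this is obtained by applying $z\mapsto z+sP(\rho)^2$, then $\rho\mapsto\rho+sz$, then $x\mapsto x+sP(\rho-sz)$. Since $T_{x^2-sz}^{-1}=\psi^{-1}\circ(\rho\mapsto\rho+sz)$, this is the paper's factorization $T_{x^2-sz}^{-1}\circ S_s\circ T_{x^2}$ with the $x$-shift commuted past $\rho\mapsto\rho+sz$.

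There is also a harmless sign slip in the dependent case. Since $\beta a_2-\alpha a_1=-1$, the $t$-coordinate goes to $t-sf(\sigma)+s(\beta c_1+\alpha c_2)$, as in~\eqref{eq:dependent-tame-conjugate}; triangularity is unaffected. The remaining steps are fine: the specialization $s\mapsto1$, the nilpotence of the block $J_{x,y}(h_1,h_2)$, and lifting elementary matrices over $K[z]$ to elementary automorphisms.
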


\begin{proof}
Write $H=\bar H+c$, where $c=H(0)$. Then
\begin{equation}\label{eq:translation-normalize}
 \id+sH=\trans{sc}\circ(\id+s\bar H).
\end{equation}
It suffices to treat $\bar H$. We distinguish the cases of linearly independent and linearly dependent components after normalization.

Suppose first that the components of $\bar H$ are independent. By Theorem~\ref{thm:classification}, a constant conjugation transforms the map into $\Ncal_P$. Define
\[
 T_Q(x,y,z)=(x,y+Q(x,z),z),\qquad
 S_s(x,y,z)=(x+sf(y),y,z+sg(y)).
\]
For the quadratic-coordinate form~\eqref{eq:quadratic-form}, relation~\eqref{eq:quadratic-relation} gives
\begin{equation}\label{eq:tame-factorization}
 \id+sH=T_{Q-s\Lambda}^{-1}\circ S_s\circ T_Q.
\end{equation}
Indeed, the second component of the right-hand side is
\begin{align*}
 \rho-Q(x+sf(\rho),z+sg(\rho))+s\Lambda(x+sf(\rho),z+sg(\rho))
 &=y+s\bigl(-Q_xf(\rho)-Q_zg(\rho)+\Lambda(x,z)\bigr)\\
 &\quad+s^2\bigl(\Lambda(f(\rho),g(\rho))-Q_2(f(\rho),g(\rho))\bigr).
\end{align*}
The last line vanishes. The other two components follow directly from the composition. Each $T$ is elementary and $S_s$ is a product of two commuting elementary maps. In the canonical case one takes $Q=x^2$, $\Lambda=z$, $f=P$, and $g=P^2$.

Suppose next that the components of $\bar H$ are dependent. A constant conjugation gives $\bar H=(h_1,h_2,0)$. Apply Lemma~\ref{lem:planar-form} and let
\[
 S(x,y,z)=(M(z)(x,y)^{\mathsf T},z),
\]
where $M$ is given by~\eqref{eq:unimodular-M}. Since the third coordinate is fixed,
\begin{equation}\label{eq:dependent-tame-conjugate}
 S\circ(\id+s\bar H)\circ S^{-1}
 =\bigl(x+s(a_1c_1+a_2c_2),\
 y-sf(x)+s(\beta c_1+\alpha c_2),\ z\bigr).
\end{equation}
Here $f(x)\in K[z][x]$. This map is triangular and therefore tame over $K[s]$. The Euclidean algorithm over $K[z]$ expresses $M\in\SL_2(K[z])$ as a product of elementary matrices. Each factor lifts to an elementary automorphism of $K[s][x,y,z]$, so $S$ and $S^{-1}$ are tame. Undoing the constant conjugation and~\eqref{eq:translation-normalize} proves the theorem.
\end{proof}

The factorization~\eqref{eq:tame-factorization} also gives the inverse before the final linear normalization. For the quadratic-coordinate form~\eqref{eq:quadratic-form}, set
\begin{equation}\label{eq:quadratic-inverse}
 \begin{aligned}
 \pi&=Y+Q(X,Z)-s\Lambda(X,Z),\\
 x&=X-sf(\pi),\qquad z=Z-sg(\pi),\qquad y=\pi-Q(x,z).
 \end{aligned}
\end{equation}
Indeed, the inverse factorization is $T_Q^{-1}\circ S_{-s}\circ T_{Q-s\Lambda}$, which gives the displayed expressions. Thus~\eqref{eq:quadratic-inverse} is a two-sided polynomial inverse over $K[s]$.

In the canonical case, if
\[
 (X,Y,Z)=(x+sP(\rho),y+s(z-2xP(\rho)),z+sP(\rho)^2),\qquad \rho=y+x^2,
\]
put $\pi=Y+X^2-sZ$. Then
\begin{equation}\label{eq:canonical-inverse}
 (\id+s\Ncal_P)^{-1}(X,Y,Z)
 =\bigl(X-sP(\pi),\ \pi-(X-sP(\pi))^2,\ Z-sP(\pi)^2\bigr).
\end{equation}
Equation~\eqref{eq:intro-invariant} proves both compositions. The construction is polynomial in $s$ and involves no division by the parameter.

In treating dependent components, we also retain the constant term with respect to the planar variables. For example, $H=(z,x,0)$ vanishes at the origin and has nilpotent Jacobian, but $H((x,y,z)+sH(x,y,z))\neq H(x,y,z)$. Its constant term as a planar map over $K[z]$ is $(z,0)$. Direct substitution gives
\[
 (\id+sH)^{-1}(X,Y,Z)=(X-sZ,\ Y-sX+s^2Z,\ Z),
\]
which illustrates why normalization at the origin does not replace normalization in the two polynomial variables over the coefficient ring.

\subsection{The generic curve of the canonical family}\label{subsec:canonical-curve}
The canonical family gives an explicit model for the relative constant field used in Section~\ref{sec:dependent-pair}. Put
\[
 \rho=y+x^2,\qquad u=P(\rho),\qquad v=z-2xu.
\]
Since the third component of $\Ncal_P$ is $u^2$, the image field and its relative algebraic closure in $\mathcal K=K(x,y,z)$ are
\begin{equation}\label{eq:canonical-fields}
 E=K(u,v),\qquad L=K(\rho,v),\qquad \mathcal K=L(x),\qquad [L:E]=\deg P.
\end{equation}
Indeed, $y=\rho-x^2$ and $z=v+2xP(\rho)$, so $\mathcal K=L(x)$. The field $L$ is relatively algebraically closed in this purely transcendental extension. Moreover, $L/E$ is finite of degree $\deg P$, the degree of the univariate extension $K(\rho)/K(P(\rho))$ after adjoining the independent variable $v$.

Over $L$, the embedded generic curve has the parametrization
\begin{equation}\label{eq:canonical-conic}
 x\longmapsto\bigl(x,\ \rho-x^2,\ v+2P(\rho)x\bigr).
\end{equation}
It is a parabola in the containing affine plane, with the unique direction at infinity $[0:1:0]$. If $\deg P>1$, the geometric generic fiber over $E$ has several components, corresponding to the distinct generic roots of $P(\rho)=u$. Passing to $L$ selects one geometrically integral component. This explains the use of the relative algebraic closure in the proof of Proposition~\ref{prop:line-conic}.

The source coordinate $\rho$ and the target relation $W-U^2=0$ have different roles. The source coordinate describes the conic fibers, whereas the target relation defines the one-dimensional relation space used in the descent in Section~\ref{sec:dependent-pair}.

\section{Block extensions and stable tameness}\label{sec:blocks}

We now consider higher-dimensional extensions, separating the base variables from the last three variables. Let $n\geq6$, put $m=n-6$, and write
\[
 \iota=(x_1,x_2,x_3),\qquad \varphi=(x_4,\ldots,x_{n-3}),\qquad
 \psi=(x_{n-2},x_{n-1},x_n).
\]
The tuple $\varphi$ is empty when $n=6$. Set $\zeta=(\iota,\varphi)$ and $R=K[\zeta]$, $L=\Frac R$. We consider maps of the form
\begin{equation}\label{eq:block-map}
 \widetilde H(\iota,\varphi,\psi)=\bigl(A(\iota),C(\iota),B(\zeta,\psi)\bigr),
\end{equation}
where $A\in K[\iota]^3$, $C\in K[\iota]^m$, and $B\in R[\psi]^3$.

For a field $K$, we use the notation
\begin{equation}\label{eq:D-family}
 \begin{aligned}
 \Dcal_{\vartheta_1,\vartheta_2,h,d_1,d_2}(r_1,r_2,r_3)
 =\bigl(&\vartheta_2h(\vartheta_1r_1+\vartheta_2r_2)+d_1,\\
       &-\vartheta_1h(\vartheta_1r_1+\vartheta_2r_2)+d_2,\ 0\bigr),
 \end{aligned}
\end{equation}
where $\vartheta_1,\vartheta_2,d_1,d_2\in K[r_3]$, $(\vartheta_1,\vartheta_2)=K[r_3]$, and $h\in K[r_3][t]$ with $h(0)=0$. We retain $\Ncal_P$ for the independent family~\eqref{eq:canonical}.

\subsection{Normal forms of the two blocks}
\begin{theorem}[Blockwise normal forms]\label{thm2}
Suppose that $\widetilde H$ has the form~\eqref{eq:block-map}, $\widetilde H(0)=0$, and $J\widetilde H$ is nilpotent. Then the following statements hold.
\begin{enumerate}
\item There is $S\in\GL_3(K)$ such that $S\circ A\circ S^{-1}$ is either a map of the form~\eqref{eq:D-family} over $K$, or $\Ncal_P$ for a nonconstant $P\in K[t]$ with $P(0)=0$.
\item Put $\chi(\zeta)=B(\zeta,0)$ and $B^0(\zeta,\psi)=B(\zeta,\psi)-\chi(\zeta)$. Viewed as a polynomial map in $\psi$ over $L$, the normalized map $B^0$ admits $T\in\GL_3(L)$ such that
\begin{equation}\label{eq:fiber-normal-form}
 T B^0(\zeta,T^{-1}\psi)=\Dcal_{\upsilon_1,\upsilon_2,f,\omega_1,\omega_2}(\psi)
 \quad\text{or}\quad \Ncal_Q(\psi).
\end{equation}
In the first case, $\upsilon_1,\upsilon_2,\omega_1,\omega_2\in L[\psi_3]$ and $f\in L[\psi_3][t]$, with $(\upsilon_1,\upsilon_2)=L[\psi_3]$ and $f(0)=0$. In the second case, $Q\in L[t]\setminus L$ and $Q(0)=0$.
\end{enumerate}
The constant conjugation $\widehat S=\diag(S,I_m,I_3)$ of the full map is
\begin{equation}\label{eq:whole-base-conjugation}
 \widehat S\circ\widetilde H\circ\widehat S^{-1}
 =\bigl(SA(S^{-1}\iota),\ C(S^{-1}\iota),\ B(S^{-1}\iota,\varphi,\psi)\bigr).
\end{equation}
The transformation in~\eqref{eq:fiber-normal-form} changes the three polynomial variables over $L$, while keeping the base parameters fixed.
\end{theorem}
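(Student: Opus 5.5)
The plan is to exploit the block lower-triangular shape of $J\widetilde H$. Order the variables as $(\iota,\varphi,\psi)$. Since $A$ and $C$ depend only on $\iota$, the Jacobian has the form
\[
 J\widetilde H=\begin{pmatrix} J_\iota A&0&0\\ J_\iota C&0&0\\ J_\iota B&J_\varphi B&J_\psi B\end{pmatrix}.
\]
Its characteristic polynomial is $\det(tI-J_\iota A)\,t^m\det(tI-J_\psi B)$, over the fraction field of the full polynomial ring. Nilpotence of $J\widetilde H$ means this equals $t^n$. Therefore the diagonal blocks $J_\iota A$ and $J_\psi B$ are nilpotent $3\times3$ matrices. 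Equivalently, by \eqref{eq:nilpotence}, each has vanishing trace, $\sigma_2$ and determinant. Since $\widetilde H(0)=0$, we get $A(0)=0$, so $A$ is normalized.

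For (i), split on whether the components of $A$ are linearly independent over $K$. If they are, Theorem~\ref{thm:classification} gives $S\in\GL_3(K)$ with $S\circ A\circ S^{-1}=\Ncal_P$. If they are dependent, a constant conjugation makes the third component zero, so $A=(h_1,h_2,0)$. Nilpotence of the $3\times3$ Jacobian with zero last row forces the planar block $J_{x_1,x_2}(h_1,h_2)$ to be nilpotent. This is the step that uses the column of $x_3$-derivatives: that column does not enter the characteristic polynomial, because the last row is zero.

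Now apply Lemma~\ref{lem:planar-form} over $R=K[x_3]$. It produces $a_1,a_2,c_1,c_2\in K[x_3]$ with $(a_1,a_2)=K[x_3]$ and $f\in K[x_3][t]$ with $f(0)=0$. This is exactly $\Dcal_{a_1,a_2,f,c_1,c_2}$ in the variables $(x_1,x_2,x_3)$. The constants $c_i$ may be nonzero polynomials in $x_3$; the normalization $A(0)=0$ only gives $c_i(0)=0$, which the family \eqref{eq:D-family} does not require anyway. Formula \eqref{eq:whole-base-conjugation} is then a direct substitution, since $C$ and $B$ only get $\iota$ replaced by $S^{-1}\iota$.

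For (ii), work over the field $L=\Frac R$ with the three variables $\psi$. The map $B^0=B-\chi$ lies in $R[\psi]^3\subseteq L[\psi]^3$ and satisfies $B^0(\zeta,0)=0$. Its Jacobian $J_\psi B^0=J_\psi B$ is nilpotent, as shown above. The field $L$ has characteristic zero, so everything proved for $K$ applies with $K$ replaced by $L$. If the components are linearly independent over $L$, Theorem~\ref{thm:classification} over $L$ yields $T\in\GL_3(L)$ and $Q\in L[t]\setminus L$, $Q(0)=0$, with $TB^0(\zeta,T^{-1}\psi)=\Ncal_Q(\psi)$. If they are dependent, repeat the argument of (i) over $L$ instead of $K$. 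Conjugate by a matrix in $\GL_3(L)$ to kill the third component, observe that the planar Jacobian block is nilpotent, and apply Lemma~\ref{lem:planar-form} with $R$ replaced by $L[\psi_3]$. The lemma's proof uses only that the coefficient ring is a PID in one variable over a characteristic-zero field, so this gives the $\Dcal$ form with coefficients in $L[\psi_3]$.

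The main obstacle is bookkeeping rather than a new idea. We must confirm that conjugating the $\psi$-block by $T\in\GL_3(L)$ is legitimate: it is a change of the polynomial variables over $L$ in which $\zeta$ stays fixed. It is not a conjugation of $\widetilde H$ itself, which is why the statement separates \eqref{eq:whole-base-conjugation} from \eqref{eq:fiber-normal-form}. We must also confirm that subtracting $\chi$ is exactly what is needed to apply results stated for normalized maps over a field.
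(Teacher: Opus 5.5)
Your proposal is correct and follows essentially the same route as the paper. The block lower-triangular Jacobian makes the diagonal blocks nilpotent. In the independent case you apply Theorem~\ref{thm:classification}, and in the dependent case a conjugation by $\GL_3(K)$ (resp.\ $\GL_3(L)$) followed by Lemma~\ref{lem:planar-form} over $K[x_3]$ (resp.\ $L[\psi_3]$). Your extra justifications fill in details the paper leaves implicit: the factorization of the characteristic polynomial, and the reduction to the planar block once the last row vanishes.
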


\begin{proof}
The Jacobian is block lower triangular:
\begin{equation}\label{eq:block-jacobian}
 J\widetilde H=
 \begin{pmatrix}
 J_\iota A&0&0\\
 J_\iota C&0&0\\
 J_\iota B&J_\varphi B&J_\psi B
 \end{pmatrix}.
\end{equation}
Its diagonal blocks $J_\iota A$ and $J_\psi B$ are nilpotent. Since $A(0)=0$, Theorem~\ref{thm:classification} applies if the components of $A$ are independent. Otherwise, a constant conjugation makes its third component vanish, and Lemma~\ref{lem:planar-form}, with $\iota_3$ as the coefficient variable, gives~\eqref{eq:D-family}. This proves (i), and direct substitution gives~\eqref{eq:whole-base-conjugation}.

Over $L$, the map $B^0$ satisfies $B^0(0)=0$ and $J_\psi B^0=J_\psi B$. Again distinguish the cases of dependent and independent components, now over the characteristic-zero field $L$. The independent case follows from Theorem~\ref{thm:classification}. In the dependent case, a matrix in $\GL_3(L)$ makes one component zero, and Lemma~\ref{lem:planar-form} applies over $L[\psi_3]$. This proves (ii).
\end{proof}

\begin{remark}\label{rem:block-interpretation}
The theorem concerns the two three-variable diagonal blocks. A matrix with entries in $K(\zeta)$ is constant for differentiation in $\psi$, but need not be constant for the full Jacobian. Thus~\eqref{eq:fiber-normal-form} cannot be substituted into~\eqref{eq:constant-conjugacy} for the full map. In particular, the notation $T\circ\widetilde H\circ T^{-1}$ with $T\in\GL_n(K(\zeta))$ does not by itself define a constant linear conjugation of $\widetilde H$. Formula~\eqref{eq:whole-base-conjugation} also shows why the middle block changes when the first three source coordinates change.
\end{remark}

The subtraction of $\chi(\zeta)$ is necessary even when $\widetilde H(0)=0$.

\begin{example}\label{ex:fiber-constant}
Take $n=6$, $A=0$, and
\[
 B(\zeta,\psi)=\bigl(\varepsilon,\ \psi_3-2\psi_1\varepsilon,\ \varepsilon^2+\zeta_1\bigr),\qquad \varepsilon=\psi_2+\psi_1^2.
\]
The full map $(0,0,0,B)$ vanishes at the origin and has nilpotent Jacobian by~\eqref{eq:block-jacobian} and~\eqref{eq:canonical-J}. Over $L=K(\zeta_1,\zeta_2,\zeta_3)$, its last three components are linearly independent. Their relation ideal is
\[
 (W-U^2-\zeta_1)\subset L[U,V,W],
\]
because $\varepsilon$ and $\psi_3-2\psi_1\varepsilon$ are algebraically independent. In particular, every nonzero relation of total degree at most two has a nonzero constant term.

A dependent form~\eqref{eq:D-family} has a nonzero linear relation. A canonical form $\Ncal_Q$ has the quadratic relation $W-U^2=0$, whose constant term is zero, even if one allows $Q(0)\neq0$. Constant linear target changes preserve the constant term of a relation, and source changes preserve the relation ideal. Therefore $B$ is not linearly conjugate over $L$ to either displayed family. After subtracting $B(\zeta,0)=(0,0,\zeta_1)$, the resulting map is exactly $\Ncal_t$.
\end{example}

\subsection{Automorphisms over the base ring}
The normal form over the fraction field alone does not establish stable tameness over $R$. We first prove polynomial invertibility over $R$.

\begin{lemma}\label{lem:inverse-descent}
Let $R$ be a domain, $L=\Frac R$, and let $G\in R[\psi_1,\ldots,\psi_r]^r$ satisfy $G(0)=0$ and $JG(0)\in\GL_r(R)$. If $G\in\GA_r(L)$, then $G\in\GA_r(R)$.
\end{lemma}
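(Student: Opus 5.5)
The plan is to compare the polynomial inverse over $L$ with the formal power-series inverse over $R$. The point is that the hypothesis $JG(0)\in\GL_r(R)$ already produces an inverse whose coefficients lie in $R$, but only as power series. Invertibility over $L$ then shows that this series inverse is in fact a polynomial.

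\emph{Step 1: the polynomial inverse fixes the origin.} Let $F\in L[\psi]^r$ be the inverse of $G$ in $\GA_r(L)$. From $F\circ G=\id$ and $G(0)=0$ we get $F(0)=F(G(0))=0$.

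\emph{Step 2: a formal inverse over $R$.} Write $G=M\psi+G_{\geq2}$ with $M=JG(0)\in\GL_r(R)$. Set $G'=M^{-1}G=\psi+M^{-1}G_{\geq2}$. We solve $G'(\Phi)=\psi$ for $\Phi\in R[[\psi]]^r$ with $\Phi(0)=0$, degree by degree. The equation reads $\Phi=\psi-M^{-1}G_{\geq2}(\Phi)$. The homogeneous part of degree $j$ of the right-hand side involves only the homogeneous parts of $\Phi$ of degree less than $j$, because $G_{\geq2}$ has order at least two and $\Phi$ has no constant term. So the recursion determines $\Phi$ uniquely, and every coefficient lies in $R$; no division is needed. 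Then $\Psi=\Phi\circ M^{-1}\psi$ satisfies $G\circ\Psi=\id$ in $R[[\psi]]^r$.

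\emph{Step 3: uniqueness over $L$.} We work in $L[[\psi]]$, where substitution of series without constant term is associative. There $F$ is a two-sided inverse of $G$ and $\Psi$ is a right inverse. Hence
\[
F=F\circ(G\circ\Psi)=(F\circ G)\circ\Psi=\Psi .
\]
So $F$ has all its coefficients in $R$, and being a polynomial, $F\in R[\psi]^r$. The identities $F\circ G=G\circ F=\id$ hold in $L[\psi]$, and therefore also in $R[\psi]\subseteq L[\psi]$. This gives $G\in\GA_r(R)$.

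The only delicate point is Step 3. There one must make sure that substitution in $L[[\psi]]$ is well defined and associative for series with zero constant term, and that $\Psi$, constructed over $R$, is compared with $F$ inside the same ring $L[[\psi]]$ via $R\subseteq L$. Both facts are standard: each homogeneous component of a composite is a finite polynomial expression in the coefficients. The hypothesis $G(0)=0$ is used twice, to ensure $F(0)=0$ and to make the recursion in Step 2 well founded. The hypothesis that $JG(0)$ is invertible over $R$, and not merely over $L$, is exactly what keeps the coefficients of $\Phi$ in $R$.
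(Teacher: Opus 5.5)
Your proposal is correct and follows the same route as the paper. Both arguments build the formal inverse over $R$ degree by degree using $JG(0)^{-1}$, then identify it with the polynomial inverse over $L$ inside $L[[\psi]]^r$. The one difference is that you make the uniqueness step explicit through $F=F\circ(G\circ\Psi)=(F\circ G)\circ\Psi=\Psi$, whereas the paper simply cites uniqueness of the formal inverse fixing the origin.
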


\begin{proof}
The formal inverse theorem over a ring gives a unique inverse $Q\in R[[\psi]]^r$ with $Q(0)=0$. It can be constructed degree by degree: after the terms below degree $d$ are known, the degree-$d$ correction is obtained by multiplying the degree-$d$ error by $JG(0)^{-1}$. Thus all coefficients lie in $R$.

The polynomial inverse of $G$ over $L$ fixes the origin. As an element of $L[[\psi]]^r$, it equals $Q$ by uniqueness. Hence $Q$ has only finitely many nonzero coefficients and belongs to $R[\psi]^r$. Both inverse identities hold over $R$, either by the formal construction or by the injection $R[\psi]\hookrightarrow L[\psi]$.
\end{proof}

\begin{proposition}\label{prop:ring-nilpotent}
Let $R=K[\zeta_1,\ldots,\zeta_r]$, and let $B\in R[\psi_1,\psi_2,\psi_3]^3$ have nilpotent Jacobian $J_\psi B$. Then
\[
 \Theta(\psi)=\psi+B(\zeta,\psi)
\]
is a polynomial automorphism over $R$ and is stably tame over $R$.
\end{proposition}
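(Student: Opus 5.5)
We reduce to the normalized case and then prove invertibility and stable tameness separately. Put $\chi(\zeta)=B(\zeta,0)$ and $B^0=B-\chi$. Then
\[
 \Theta=\trans{\chi}\circ(\psi+B^0),
\]
and the translation by $\chi\in R^3$ is a product of elementary automorphisms over $R$. It therefore suffices to treat $G=\psi+B^0$, which satisfies $G(0)=0$. Its linear part is $JG(0)=I+J_\psi B^0(0)$. This matrix has entries in $R$ and is $I$ plus a nilpotent matrix, so its inverse is the finite sum $\sum_{j=0}^{2}(-N)^j$ with $N=J_\psi B^0(0)$, and $JG(0)\in\GL_3(R)$. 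Over $L=\Frac R$, which has characteristic zero, Theorem~\ref{thm1} applied with $K$ replaced by $L$ gives $G\in\TA_3(L)\subseteq\GA_3(L)$. Lemma~\ref{lem:inverse-descent} then gives $G\in\GA_3(R)$, so $\Theta\in\GA_3(R)$.

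For stable tameness we invoke the residue-field criterion of Berson, van den Essen, and Wright~\cite[Theorem~4.12]{BEW}. As I recall it, for a regular ring $R$ (here a polynomial ring over a field), an automorphism $F\in\GA_n(R)$ is stably tame over $R$ if its image $F\otimes k(\mathfrak p)$ is stably tame over the residue field $k(\mathfrak p)$ for every prime $\mathfrak p$ of $R$. The hypotheses must be checked in the exact form of that theorem. If it requires only maximal ideals, or additionally requires $\det$ or linear-part conditions, the argument is the same.

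For each prime $\mathfrak p$, let $\bar B$ be the image of $B$ in $k(\mathfrak p)[\psi]^3$. Reduction commutes with differentiation in $\psi$. Moreover, $(J_\psi B)^3=0$ holds as a polynomial identity over $R$, so $(J_\psi\bar B)^3=0$ and $J_\psi\bar B$ is nilpotent. Since $k(\mathfrak p)$ is a field of characteristic zero, Theorem~\ref{thm1} over $k(\mathfrak p)$ shows that $\psi+\bar B$ is tame, hence stably tame, over $k(\mathfrak p)$. The criterion then gives stable tameness of $\Theta$ over $R$. Alternatively, we can apply the criterion to $G$ and compose with the tame translation.

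The main obstacle is matching the precise hypotheses of~\cite[Theorem~4.12]{BEW}. This includes whether the ring must be a polynomial ring over a field, which holds here; whether the map must already be known to lie in $\GA_n(R)$, which is why the first step uses Lemma~\ref{lem:inverse-descent}; and whether residue fields at non-maximal primes, which are not algebraically closed, are covered. Theorem~\ref{thm1} is valid over arbitrary characteristic-zero fields without extracting roots, so all residue fields are covered.

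If the criterion instead needs a Jacobian determinant equal to $1$ or a linear part in $\mathrm{E}_n(R)$, we use $\det J_\psi\Theta=\det(I+J_\psi B)=1$. If needed, we also use Suslin's theorem to write the linear part, which lies in $\SL_3(R)$, as a product of elementary matrices, and factor it off as a tame map.
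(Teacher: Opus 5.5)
Your proposal is correct and follows the paper's proof step for step. You subtract $\chi=B(\zeta,0)$, use $(I+N)^{-1}=I-N+N^2$ to get $JG(0)\in\GL_3(R)$, and apply Theorem~\ref{thm1} over $L$ with Lemma~\ref{lem:inverse-descent} for invertibility over $R$. You then apply Theorem~\ref{thm1} at every residue field $\kappa(\mathfrak p)$ and conclude with \cite[Theorem~4.12]{BEW}, which the paper uses in the same form you recall: all primes of the regular ring $R$, with the map already known to lie in $\GA_3(R)$.
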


\begin{proof}
Put $\chi=B(\zeta,0)$ and $G(\psi)=\psi+B(\zeta,\psi)-\chi$. Then $G(0)=0$. The matrix $N=J_\psi B(\zeta,0)$ satisfies $N^3=0$, so
\[
 JG(0)=I+N\in\GL_3(R),\qquad (I+N)^{-1}=I-N+N^2.
\]
Over $L=\Frac R$, Theorem~\ref{thm1} implies that $G$ is a polynomial automorphism. Lemma~\ref{lem:inverse-descent} gives $G\in\GA_3(R)$. Since $\Theta=\trans{\chi}\circ G$, we also have $\Theta\in\GA_3(R)$.

For every prime $\mathfrak p\in\Spec R$, let $\kappa(\mathfrak p)$ be the residue field. It contains $K$, so it has characteristic zero. Nilpotence is expressed by a polynomial matrix identity and is therefore preserved under specialization. Thus
\[
 \Theta_{\mathfrak p}=\psi+B_{\mathfrak p}(\psi)
 \in\TA_3(\kappa(\mathfrak p))
\]
by Theorem~\ref{thm1}. The ring $R$ is regular. The residue-field criterion~\cite[Theorem~4.12]{BEW} therefore implies that $\Theta$ is stably tame over $R$.
\end{proof}

\subsection{Stable tameness of the full map}
\begin{theorem}[Stable tameness of block extensions]\label{thm3}
Let $n\geq6$ and let $\widetilde H\in K[x_1,\ldots,x_n]^n$ have nilpotent Jacobian $J\widetilde H$ and satisfy
\[
 H_i\in K[x_1,x_2,x_3]\qquad(1\leq i\leq n-3).
\]
Then $F=\id+\widetilde H$ is a polynomial automorphism and is stably tame over $K$. In particular, this holds for the maps of Theorem~\ref{thm2}.
\end{theorem}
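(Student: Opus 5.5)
Write $F=\id+\widetilde H$ in the block notation of~\eqref{eq:block-map}, so that $F=(\iota+A(\iota),\ \varphi+C(\iota),\ \psi+B(\zeta,\psi))$. By~\eqref{eq:block-jacobian}, the diagonal blocks $J_\iota A$ and $J_\psi B$ are nilpotent; the middle diagonal block is zero. We do not assume $\widetilde H(0)=0$, since translations are tame, but nothing below needs it. The plan is to factor $F$ as a composition of two maps and to show that each factor is stably tame over $K$. Set
\[
 F_1=(\iota+A(\iota),\ \varphi+C(\iota),\ \psi),\qquad F_2=(\iota,\ \varphi,\ \psi+B(\zeta,\psi)).
\]
Since $F_1$ does not change $\zeta$-free coordinates other than $\iota,\varphi$ and fixes $\psi$, while $F_2$ fixes $\zeta$, we get $F=F_2\circ F_1$. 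Indeed, substituting $F_1$ into $F_2$ replaces $\zeta$ by $(\iota+A,\varphi+C)$ in $B$. So we should instead use $F=F_1'\circ F_2$ with $F_2$ first. Here $F_2$ leaves $\zeta$ unchanged, and applying $F_1$ afterwards acts only on $\iota,\varphi$. Then $F_1\circ F_2=(\iota+A(\iota),\varphi+C(\iota),\psi+B(\zeta,\psi))=F$, as required.

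\emph{The base factor $F_1$.} By Theorem~\ref{thm1}, $\iota\mapsto\iota+A(\iota)$ is tame in $\TA_3(K)$. Extending its tame factors by the identity on $(\varphi,\psi)$ keeps them tame in $\TA_n(K)$. The remaining map $(\iota,\varphi+C(\iota),\psi)$ is a product of elementary automorphisms $e_j(C_j(\iota))$. Its composition with the extended $A$-map gives $F_1$, since $C$ is evaluated at the old $\iota$: apply the $C$-shift first, then the $A$-map. Hence $F_1\in\TA_n(K)$.

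\emph{The fiber factor $F_2$.} By Proposition~\ref{prop:ring-nilpotent} with $R=K[\zeta]$, the map $\Theta=\psi+B(\zeta,\psi)$ lies in $\GA_3(R)$ and is stably tame over $R$. Thus there is $m'$ with $\Theta^{[m']}\in\TA_{3+m'}(R)$. Generators of $\TA_{3+m'}(R)$ are of two kinds. Elementary maps with coefficients in $R=K[\zeta]$ are elementary automorphisms of $K[\zeta,\psi,\dots]$ fixing $\zeta$. Matrices in $\GL_{3+m'}(K[\zeta])$ are the main obstacle, since they are not automatically tame over $K$. This is where Suslin's theorem enters. By inserting a factor $\diag(\det^{-1},1,\dots)$ with constant determinant in $K^*$ (units of $K[\zeta]$ are constants), each such matrix reduces to $\SL_{3+m'}(K[\zeta])$. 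For size at least $3$, Suslin's theorem writes this as a product of elementary matrices over $K[\zeta]$, and each factor is an elementary automorphism over $K$. Hence $F_2^{[m']}$, which is $\Theta^{[m']}$ with $\zeta$ adjoined as fixed variables, lies in $\TA_{n+m'}(K)$. If $3+m'<3$ were an issue we could simply enlarge $m'$.

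\emph{Conclusion.} Finally, $F^{[m']}=F_1^{[m']}\circ F_2^{[m']}$ is tame, and $F$ is an automorphism because both factors are. The step requiring care is the passage from tameness over $R$ to tameness over $K$ via Suslin's theorem; everything else is bookkeeping.
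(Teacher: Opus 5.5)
Your proposal is correct and follows essentially the same route as the paper: the fiber map is applied first in the splitting $F=(F_0(\zeta),\psi)\circ\Psi$, the base is tame by Theorem~\ref{thm1} together with elementary shifts, $\Theta$ is stably tame over $K[\zeta]$ by Proposition~\ref{prop:ring-nilpotent}, and Suslin's theorem makes the linear factors tame over $K$ (your reduction of $\GL$ to $\SL$ by a constant diagonal factor is exactly the form $\GL_l(K[\zeta])=\langle E_l(K[\zeta]),\GL_l(K)\rangle$ used there). You should delete the initial incorrect claim $F=F_2\circ F_1$ and keep only the corrected order $F=F_1\circ F_2$; applying the $C$-shift before the $A$-map is a harmless simplification of the paper's factor $(\iota,\varphi+C(g^{-1}(\iota)))$.
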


\begin{proof}
Write $F$ in block form:
\begin{equation}\label{eq:block-F}
 \begin{aligned}
 F(\zeta,\psi)&=\bigl(F_0(\zeta),\Theta_{\zeta}(\psi)\bigr),\\
 F_0(\iota,\varphi)&=(\iota+A(\iota),\varphi+C(\iota)),\qquad \Theta_{\zeta}(\psi)=\psi+B(\zeta,\psi).
 \end{aligned}
\end{equation}
The diagonal blocks in~\eqref{eq:block-jacobian} are nilpotent. By Theorem~\ref{thm1}, $g(\iota)=\iota+A(\iota)$ is tame over $K$. The base automorphism factors as
\[
 F_0=(\iota,\varphi+C(g^{-1}(\iota)))\circ(g(\iota),\varphi).
\]
The first factor is a product of elementary maps, one for each middle coordinate. Hence $F_0$ is tame. For $m=0$, this factor is the identity.

By Proposition~\ref{prop:ring-nilpotent}, $\Theta\in\GA_3(R)$ is stably tame over $R=K[\zeta]$. Thus $(\Theta,\psi_4,\ldots,\psi_{3+s})\in\TA_{3+s}(R)$ for some $s\geq0$. An elementary factor over $R$ is elementary over $K$ after the base variables $\zeta$ are included and held fixed. To treat the linear factors, use Suslin's theorem in the form
\begin{equation}\label{eq:Suslin}
 \GL_l(K[\zeta])=\langle E_l(K[\zeta]),\GL_l(K)\rangle,
 \qquad l\geq3;
\end{equation}
see~\cite[Theorem~3.23]{BEW}. Each elementary matrix in~\eqref{eq:Suslin} lifts to an elementary polynomial automorphism over $K$, while a matrix in $\GL_l(K)$ is linear over $K$. It follows that $\Psi(\zeta,\psi)=(\zeta,\Theta_{\zeta}(\psi))$ is stably tame over $K$.

Finally, $F=(F_0(\zeta),\psi)\circ\Psi$. The first factor is tame and the second is stably tame. Their composition is tame after a common stabilization. This also establishes polynomial invertibility. Explicitly, for a target $(\varpi,z)$ one first sets $\zeta=F_0^{-1}(\varpi)$ and then $\psi=\Theta_{\zeta}^{-1}(z)$; the coefficients of $\Theta^{-1}$ lie in $R$ by Proposition~\ref{prop:ring-nilpotent}.
\end{proof}

\begin{remark}\label{rem:block-scope}
Normalization of $\widetilde H$ is not needed in Theorem~\ref{thm3}, because Theorem~\ref{thm1} allows arbitrary constant terms and Proposition~\ref{prop:ring-nilpotent} explicitly subtracts the relative constant term. The argument gives no uniform bound on the number of stabilizing variables. It also does not assert tameness of the full map without stabilization.
\end{remark}

\section{Concluding remarks}\label{sec:conclusion}

The three-dimensional argument consists of a geometric reduction followed by a polynomial calculation. The generic-fiber construction uses the trace and principal-minor identities to obtain a plane equation and a quadratic equation. Homogeneous triangularization then restricts the directions at infinity. Once a dependent pair is available, the normal form and its tame factorization follow from the derivation argument of Proposition~\ref{prop:pair-normal}. The descent uses the one-dimensional space of quadratic relations, so the final conjugating maps are defined over the original field.

The canonical family also gives several explicit consequences. If $d=\deg P\geq1$, then
\[
 \deg\Ncal_P=4d,\qquad \rank J\Ncal_P=2.
\]
The minor in rows $1,2$ and columns $2,3$ of~\eqref{eq:canonical-J} is $P'(y+x^2)\neq0$. The $(1,3)$ entry of $(J\Ncal_P)^2$ is the same nonzero polynomial. Thus the nilpotency index is exactly three. The classification therefore implies that every normalized map in dimension three with nilpotent Jacobian and linearly independent components has degree divisible by four. This is a consequence of the proposed general theorem, whereas the displayed degree, rank, and nilpotency-index calculations for the canonical family are direct.

For the block extensions, the distinction between coefficients and polynomial variables is essential. The last three components must first be normalized over the fraction field of the base ring. The resulting linear normal form need not arise from a change of coordinates over the polynomial ring. Stable tameness follows from a separate argument: polynomial inversion descends through the formal inverse at the origin, and residue-field tameness permits the application of the regular-ring criterion. This argument does not require descent of the rational conjugating matrices.

The recent work of Casta{\~n}eda, Honorato, and Valenzuela-Henr{\'i}quez~\cite[Theorems~1.2, 5.1, and~8.1]{CHV2026} connects Keller maps with the weak Markus--Yamabe injectivity problem. Starting from a real Keller map $F=\id+H$ with $H(0)=JH(0)=0$ and component degrees $d_i$, their chain realization constructs a polynomial vector field $X_{F,\beta}$ satisfying
\[
 N=\sum_{i=1}^{n}\max\{d_i,2\}-n,
 \qquad
 \operatorname{Spec}\bigl(JX_{F,\beta}(Z)\bigr)=\{-1\}
 \quad (Z\in\mathbb R^N).
\]
Its equilibria correspond polynomially to $F^{-1}(\beta)$. Component degrees $(4,6,7)$ yield a degree-seven vector field in dimension $14$ with three rational equilibria. A separate construction yields a degree-three vector field in dimension $18$. Adjoining negative identity vector fields extends the counterexamples to every dimension at least $14$.

For these fields, $R=\id+X$ has nilpotent Jacobian, whereas $\id-R=-X$ is noninjective. Thus nilpotence alone does not imply invertibility in arbitrary dimension. The constructions do not impose the variable dependence required by the block theorems of Section~\ref{sec:blocks}.

In dimension three, Theorem~\ref{thm1} gives injectivity for the constant-spectrum class $X=-\id+H$ with $JH$ nilpotent. This does not settle the general Hurwitz case, where the eigenvalues need only have negative real parts. The cited work leaves dimensions $3$ through $13$ unresolved.

Several questions lie beyond the scope of these arguments. In dimension four, can the nilpotence identities control the degree of the embedded generic curve when the Jacobian rank is three? Under which additional hypotheses can two independent constant linear combinations of the components be made algebraically dependent? For the block extensions, can one bound the required stabilization in terms of the dimension or degrees? Answering these questions requires additional arguments; the three-dimensional plane-conic calculation does not extend formally to a general higher-dimensional map.

\section*{Acknowledgements}
The authors acknowledge the use of ChatGPT-6 Astra to assist with brainstorming, mathematical development, and manuscript drafting. The authors are solely responsible for the final content, analysis, and conclusions.

\section*{Funding}
This work was jointly supported by the National Key R\&D Program of China under grant No.~2023YFA1009401 and the Natural Science Foundation of China under grant No.~12171324.\\ 

The second author is supported by the Scientific Research Fund of Hunan Provincial Education Department (Grant No. 25B0088), the NSF of China (Grant No. 12371020) and the Construct Program of the Key Discipline in Hunan Province.

\bibliographystyle{amsplain}
\bibliography{references}

@article{ArzhantsevPetravchuk,
 author={Arzhantsev, Ivan V. and Petravchuk, Anatoliy P.},
 title={Closed polynomials and saturated subalgebras of polynomial algebras},
 journal={Ukrainian Mathematical Journal}, volume={59}, number={12},
 pages={1783--1790}, year={2007},
 note={Preprint: Closed and irreducible polynomials in several variables, arXiv:math/0608157; lemma numbering refers to the preprint},
 url={https://arxiv.org/abs/math/0608157}
}

@article{BCW,
 author={Bass, Hyman and Connell, Edwin H. and Wright, David},
 title={The {Jacobian} conjecture: Reduction of degree and formal expansion of the inverse},
 journal={Bulletin of the American Mathematical Society (N.S.)},
 volume={7}, number={2}, pages={287--330}, year={1982}
}

@article{BEW,
 author={Berson, Joost and van den Essen, Arno and Wright, David},
 title={Stable tameness of two-dimensional polynomial automorphisms over a regular ring},
 journal={Advances in Mathematics}, volume={230}, number={4--6},
 pages={2176--2197}, year={2012}, doi={10.1016/j.aim.2012.04.017}
}

@article{CastanedaEssen,
 author={Casta{\~n}eda, {\'A}lvaro and van den Essen, Arno},
 title={A new class of nilpotent {Jacobians} in any dimension},
 journal={Journal of Algebra}, volume={566}, pages={283--301}, year={2021}
}

@article{ChamberlandEssen,
 author={Chamberland, Marc and van den Essen, Arno},
 title={Nilpotent {Jacobians} in dimension three},
 journal={Journal of Pure and Applied Algebra}, volume={205}, number={1},
 pages={146--155}, year={2006}
}

@article{deBondt2006,
 author={de Bondt, Michiel},
 title={Quasi-translations and counterexamples to the homogeneous dependence problem},
 journal={Proceedings of the American Mathematical Society},
 volume={134}, pages={2849--2856}, year={2006}
}

@article{deBondtEssen,
 author={de Bondt, Michiel and van den Essen, Arno},
 title={The {Jacobian} conjecture: Linear triangularization for homogeneous polynomial maps in dimension three},
 journal={Journal of Algebra}, volume={294}, number={1}, pages={294--306}, year={2005}
}

@article{deBondtYan2014,
 author={de Bondt, Michiel and Yan, Dan},
 title={Triangularization properties of power linear maps and the structural conjecture},
 journal={Annales Polonici Mathematici}, volume={112}, number={3},
 pages={247--266}, year={2014}, doi={10.4064/ap112-3-4},
 url={https://arxiv.org/abs/1302.6930}
}

@unpublished{HeYan,
 author={He, Yuan and Yan, Dan},
 title={The classification of some polynomial maps in dimension three},
 note={Preprint, arXiv:2609.13843, 2026.}
}

@article{Keller,
 author={Keller, Ott-Heinrich}, title={Ganze {Cremona}-Transformationen},
 journal={Monatshefte f{\"u}r Mathematik und Physik},
 volume={47}, pages={299--306}, year={1939}, doi={10.1007/BF01695502}
}

@book{EssenBook,
 author={van den Essen, Arno},
 title={Polynomial Automorphisms and the {Jacobian} Conjecture},
 series={Progress in Mathematics}, volume={190}, publisher={Birkh{\"a}user},
 address={Basel}, year={2000}
}

@article{Yagzhev,
 author={Yagzhev, A. V.}, title={On {Keller}'s problem},
 journal={Siberian Mathematical Journal}, volume={21}, pages={747--754}, year={1980}
}

@article{Yan2022,
 author={Yan, Dan}, title={Some polynomial maps with {Jacobian} rank two or three},
 journal={Algebra Colloquium}, volume={29}, number={2}, pages={341--360}, year={2022}
}

@article{YanBondt2019,
 author={Yan, Dan and de Bondt, Michiel},
 title={The classification of some polynomial maps with nilpotent {Jacobians}},
 journal={Linear Algebra and its Applications}, volume={565},
 pages={287--308}, year={2019}, url={https://arxiv.org/abs/1710.04210}
}

@article{YanTang,
 author={Yan, Dan and Tang, Guoping},
 title={Polynomial maps with nilpotent {Jacobians} in dimension three},
 journal={Linear Algebra and its Applications}, volume={489},
 pages={298--323}, year={2016}
}

@unpublished{ElHilany2025,
 author={El Hilany, Boulos},
 title={Around the topological classification problem of polynomial maps: A survey},
 year={2025},
 note={Preprint, arXiv:2501.03828, version 2},
 eprint={2501.03828}, archivePrefix={arXiv}, primaryClass={math.AG},
 doi={10.48550/arXiv.2501.03828},
 url={https://arxiv.org/abs/2501.03828}
}

@unpublished{LeeLi2024,
 author={Lee, Kyungyong and Li, Li},
 title={On the two-dimensional {Jacobian} conjecture: {Magnus}' formula revisited, {IV}},
 year={2024},
 note={Preprint, arXiv:2408.01279},
 eprint={2408.01279}, archivePrefix={arXiv}, primaryClass={math.AG},
 doi={10.48550/arXiv.2408.01279},
 url={https://arxiv.org/abs/2408.01279}
}

@unpublished{RamirezValqui2025,
 author={Ram{\'i}rez, Valeria and Valqui, Christian},
 title={The {Groebner} basis and solution set of a polynomial system related to the {Jacobian} conjecture},
 year={2025},
 note={Preprint, arXiv:2506.05697},
 eprint={2506.05697}, archivePrefix={arXiv}, primaryClass={math.AG},
 doi={10.48550/arXiv.2506.05697},
 url={https://arxiv.org/abs/2506.05697}
}

@unpublished{HamadaKatoKomiya2025,
 author={Hamada, Lucas and Kato, Kazuki and Komiya, Ryo},
 title={A {Tate} algebra version of the {Jacobian} conjecture},
 year={2025},
 note={Preprint, arXiv:2502.10769},
 eprint={2502.10769}, archivePrefix={arXiv}, primaryClass={math.AG},
 doi={10.48550/arXiv.2502.10769},
 url={https://arxiv.org/abs/2502.10769}
}

@unpublished{Gao2026,
 author={Gao, Shuhong},
 title={Counterexamples to the {Jacobian} conjecture in dimensions greater than two},
 year={2026},
 note={Preprint, arXiv:2608.00222},
 eprint={2608.00222}, archivePrefix={arXiv}, primaryClass={math.AG},
 url={https://arxiv.org/html/2608.00222v1}
}

@misc{Tao2026,
 author={Tao, Terence},
 title={A digestion of the {Jacobian} conjecture counterexample},
 year={2026}, month=jul,
 howpublished={What's new, expository article},
 note={July 21, 2026},
 url={https://terrytao.wordpress.com/2026/07/21/a-digestion-of-the-jacobian-conjecture-counterexample/}
}

@unpublished{CHV2026,
 author={Casta{\~n}eda, {\'A}lvaro and Honorato, Gerardo and Valenzuela-Henr{\'i}quez, Francisco},
 title={The weak {Markus--Yamabe} conjecture fails in dimension 14},
 year={2026},
 note={Preprint, arXiv:2608.05392},
 eprint={2608.05392}, archivePrefix={arXiv}, primaryClass={math.AG},
 url={https://arxiv.org/abs/2608.05392}
}
\end{document}